\newtheorem{myrem}{Remark}[section]
\newtheorem{mytheo}{Theorem}[section]
\newtheorem{mylem}{Lemma}[section]
\theoremstyle{remark}
\numberwithin{equation}{section}
\begin{document}

\title{Quantitative estimates of the spectral norm of random matrices with independent columns}

\pagestyle{fancy}

\fancyhf{} 


\fancyhead[CO]{\footnotesize ON RANDOM  MATRICES WITH INDEPENDENT COLUMNS }

\fancyhead[CE]{\footnotesize G. DAI, Z. SU AND H. WANG }

\fancyhead[LE]{\thepage}
\fancyhead[RO]{\thepage}
\renewcommand{\headrulewidth}{0mm}

\author{Guozheng Dai}
\address{School of Mathematical Sciences, Zhejiang University, Hangzhou, 310058,  China.}
\email{guozhengdai1997@gmail.com}

\author{Zhonggen Su}
\address{School of Mathematical Sciences, Zhejiang University, Hangzhou, 310058,  China.}
\email{suzhonggen@zju.edu.cn}

\author{Hanchao Wang }
\address{School of Mathematics, Shandong University, Jinan, 250100, China.}
\email{hcwang06@gmail.com}

\date{}

\keywords{Moment estimates, Spectral norm, Subexponential variables}

\begin{abstract}
		This paper investigates the nonasymptotic properties of the spectral norm of some random matrices with independent columns.  In particular, we consider an $m\times n$ random matrix $BA$, where $A$ is an $N\times n$ random matrix with independent mean-zero subexponential entries, and $B$ is an $m\times N$ deterministic matrix. We prove that the $L_{p}$ norm of the spectral norm of $BA$ is upper bounded by $(\sqrt{m}+\sqrt{n})p$. It is remarkable that this result is independent of the dimension $N$. 
\end{abstract}

\maketitle

\section{Introduction}
The study of random matrices can be traced back to the 1920s. Wishart \cite{Wishart_Biometrika} initiated the systematic study of large random matrices  due to the need for his work on the statistical analysis of large samples. With the works of Wigner \cite{Wigner_annals_1,Wigner_annals_2} on his famous semicircle law, which significantly contributed to understanding    the spectral statistics of random matrices from an asymptotic point of view, random matrices attracted more and more attention. Nowadays, random matrix theory has become an essential branch of probability, which also has come to play an important role in many other areas \cite{introduction,Tao.matrices,tropp}.

In this paper, we concentrate on the nonasymptotic theory of random matrices. This theory is of particular importance in the applications to high-dimensional problems, such as the convex geometry \cite{Friedlad_IMRN}, the compressed sensing \cite{Rauhut}, the statistical learning theory \cite{highdimension}. The quantity we are concerned with here is the spectral norm of random matrices. Recall that the spectral norm  $\Vert A\Vert$ is defined as the largest singular value of a matrix $A$. In particular, $\Vert A\Vert=\sup_{x}\Vert Ax\Vert_{2}/\Vert x\Vert_{2}$ where $\Vert \cdot\Vert_{2}$ denotes the Euclidean norm. We are interested in the  quantitive estimates of the spectral norms of some random matrices.

\subsection{Matrices with independent entries}

Consider an $N\times n$ random matrix $A=(a_{ij})$. When $a_{ij}$ are independent random variables, the spectral norm $\Vert A\Vert$ is well studied. We next review some known results of $\Vert A\Vert$ in this setting.

Assume that $a_{ij}$ are standard gaussian variables, i.e. $A$ is a Ginibre matrix, one has (see e.g. \cite{Szarek_J_Com})
\begin{align}
	\textsf{P}\{\Vert A\Vert> C(\sqrt{N}+\sqrt{n})  \}\le \exp(-c(N+n)),\nonumber
\end{align}
where $C, c>0$ are universal constants. 

When $a_{ij}$ are  centered subgaussian random variables (not necessarily identically distributed), an $\varepsilon$-net argument yields that (see e.g. \cite{highdimension})
\begin{align} 
	\textsf{P}\big\{\Vert A\Vert> C(\sqrt{n}+\sqrt{N})\max_{i,j}\Vert a_{ij}\Vert_{\psi_{2}}\big\}\le \exp\big(-c(N+n)\big).\nonumber
\end{align} 
Here, we call a random variable $\xi$ subgaussian if there exists $K>0$ satisfying 
\begin{align}
	\textsf{E}\exp(\frac{ \xi^{2}}{K^{2}})\le 2,\nonumber
\end{align}
and the subgaussian norm is defined as follows:
\begin{align}
	\|\xi\|_{\psi_{2}}=\inf\{K>0: \textsf{E}\exp(\frac{\xi^{2}}{K^{2}})\le 2 \}.\nonumber
\end{align}

We remark that the above so-called $\varepsilon$-net argument relies heavily on the tails of the subgaussian variables, which decay square exponentally. Hence such argument fails when facing  heavy-tailed cases, such as the subexponential case. 

Recall that a random variable $\xi$ is called subexponential if there exists $K>0$ such that
\begin{align}
	\textsf{E}\exp(\frac{\vert \xi\vert}{K})\le 2,\nonumber
\end{align}
and the subexponential norm is defined by
\begin{align}
	\|\xi\|_{\psi_{1}}=\inf\{K>0: \textsf{E}\exp(\frac{| \xi|}{K})\le 2 \}.\nonumber
\end{align}
The class of subexponential distributions is important in many fields and covers a lot of commonly used distributions. For example, as follows from the Brunn-Minkowski inequality, the uniform distribution on every convex body is subexponential (see e.g. \cite{Giannopoulos_book}). 

When $a_{ij}$ are centered subexponential random variables, a direct consequence of the main result in
 \cite{dsw} (see Theorem 1.1 there) yields that
\begin{align}\label{1.1}
	\textsf{P}\{\Vert A\Vert> C(\sqrt{n}+\sqrt{N})\max_{i,j}\Vert a_{ij}\Vert_{\psi_{1}} \}\le \exp(-c(\sqrt{n}+\sqrt{N})).
\end{align}

Seginer \cite{Seginer_CPC} and Latała \cite{Latala_proceeding}  explored the nonasymptotic properties of $\Vert A\Vert$  in a more general setting. In particular, Seginer only assumed $a_{ij}$ are i.i.d. mean zero random variables and Latała
 removed the condition of identical distribution. In these settings, it is impossible to bound the $\Vert A\Vert$ with overwhelming probability as above. Hence, they both only gave  upper bounds for $\textsf{E}\Vert A\Vert$. We refer interested readers to \cite{Seginer_CPC} for these results and do not cover them here.

\subsection{Matrices with independent columns}

We next consider the random matrices with independent columns but the entries are not necessarily independent. In particular, let $W=(W_{1}, \cdots, W_{n})$ be an $m\times n$ random matrix with independent volumn vectors $W_{i}, 1\le i\le n$. We first review some known results of $\Vert W\Vert$.

When  $W_{i}$ are further assumed to be distributed identically according to an isotropic, log-concave probability measurable on $\mathbb{R}^{m}$, Adamczak et al. \cite{Adamczak_1} showed that
\begin{align}
	\textsf{P}\{\Vert W\Vert>C(\sqrt{m}+\sqrt{n})  \}\le \exp(-c(\sqrt{m}+\sqrt{n})).\nonumber
\end{align}
Here a random vector $X\in\mathbb{R}^{m}$ is called istropic if for all $y\in\mathbb{R}^{m}$
\begin{align}
	\textsf{E}X^{\top}y=0,\quad \textsf{E}(X^\top y)^{2}=\Vert y\Vert_{2}^{2},\nonumber
\end{align}
and a measure $\mu$ on $\mathbb{R}^{m}$ is log-concave if for any measure subsets $A, B$ of $\mathbb{R}^{m}$ and any $0\le \theta\le 1$,
\begin{align}
	\mu\{\theta A+(1-\theta)B  \}\ge \mu\{A\}^{\theta}\mu\{B\}^{(1-\theta)}.\nonumber
\end{align}

Another interesting case of such model can written as $W=BA$, where $A$ is an $N\times n$ random matrix with independent columns and $B$ is an $m\times N$ non-random matrix. Obviously, $W_{i}=BA_{i}$ are independent.

 When $B$ is an $N\times N$  nonnegative definite Hermitian matrix, this model has extensive  research in high-dimensional statistics. For example, suppose that $X$ follows an $N$-dimensional gaussian distribution $N(0_{N}, \Sigma_{N})$ and we want to test
$\quad \Sigma_{N}=M$. Let $Y=M^{-1/2}X$ and then we can reduce the above hypothesis test problem of $X$ to the null hypothesis test problem of $Y$. To settle this problem, we need to explore the properties of the sample covariance matrix of $Y$, i.e. 
\begin{align}
	\frac{1}{n}(Y_{1}, \cdots, Y_{n})(Y_{1}, \cdots, Y_{n})^{\top}=\frac{1}{n}M^{-1/2}(X_{1}, \cdots, X_{n})(X_{1}, \cdots, X_{n})^\top M^{-1/2},\nonumber
\end{align}
where $X_{1},\cdots,X_{n}\stackrel{i.i.d.}{\sim}X$.  Here $B=M^{-1/2}$ and $A=(X_{1}, \cdots, X_{n})/\sqrt{n}$.
 Since most results in this regard are in the limit regime, we do not cover them here and refer interested readers to \cite{baiap1,baiap2} for more information.

When $\Vert B\Vert\le 1$ and the entries of $A$ are independent, one can estimate the  smallest singular value of random rectangular matrices based on the upper bound of $\Vert BA\Vert$ (see \cite{Rudelson,ptrfvershynin} for details). For example, Rudelson and Vershynin \cite{Rudelson} used an $\varepsilon$-net argument to prove that, when $a_{ij}$ are  mean-zero subgaussian variables,
\begin{align}
	\textsf{P}\big\{ \Vert W\Vert>C(\sqrt{n}+\sqrt{N})\max_{i,j}\Vert a_{ij}\Vert_{\psi_{2}}  \big\}\le \exp\big(-c(N+n)   \big).\nonumber
\end{align}
They then applied this result to obtain a deviation inequality for the smallest singular value of random rectangular matrices with i.i.d. standard subgaussian entries.  Following \cite{Rudelson}, Vershynin \cite{ptrfvershynin} 
consider the case where  $a_{ij}$ are mean-zero variables with finite $(4+\varepsilon)$ moment. He
showed that 
\begin{align}
	\textsf{E}\Vert W\Vert\le C(\varepsilon)(\sqrt{n}+\sqrt{m}),
\end{align}
  where $C(\varepsilon)$ is a constant depending only on the parameter $\varepsilon$. With this inequality in hand, he explored the properties of the  smallest singular value of random rectangular matrices whose entries are i.i.d. standard variables with finite fourth moment.

This paper aims to study the nonasymptotic properties of $W=BA$, where $B$ is an $m\times N$ non-random matrix, and $A=(a_{ij})$ is an $N\times n$ random matrix with independent mean-zero subexponential  entries. 
Our main result concerns the $p$-th moment bound of $\Vert W\Vert$ and shows that $(\textsf{E}\Vert W\Vert^{p})^{1/p}$ is upper bounded by $(\sqrt{n}+\sqrt{m})p$. The detailed contents are as follows:

\begin{mytheo}\label{Theo_main_weak}
	Let $W=BA$ be an $m\times n$ random matrix, where $A=(a_{ij})$ is an $N\times n$ random matrix whose entries are independent subexponential random variables with mean zero, and $B$ is an $m\times N$ non-random matrix such that $\Vert B\Vert\le 1$. Then for $p\ge 1$
	\begin{align}\label{1.3}
		(\textsf{E}\Vert W\Vert^{p})^{1/p}\le C\max_{i, j}\Vert a_{ij}\Vert_{\psi_{1}}(\sqrt{m}+\sqrt{n})p,
	\end{align}
	where $C$ is a universal constant.
\end{mytheo}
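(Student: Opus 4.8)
The plan is to view $\Vert W\Vert$ as the supremum of a random process indexed by rank-one matrices and to estimate it by generic chaining; the key point is that, although $A$ lives in the large space $\mathbb{R}^{N\times n}$, the factor $B$ confines the effective index set to an $m$-dimensional ellipsoid, and this is exactly what removes the dependence on $N$.

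Write $K:=\max_{i,j}\Vert a_{ij}\Vert_{\psi_{1}}$, let $B_{2}^{k}$ be the Euclidean unit ball of $\mathbb{R}^{k}$, and let $\Vert\cdot\Vert_{HS}$ and $\Vert\cdot\Vert_{\infty}$ denote the Hilbert--Schmidt norm and the largest modulus of an entry of a matrix. Using $y^{\top}BAx=\langle A,(B^{\top}y)x^{\top}\rangle$ and bilinearity, I would first write
\begin{align*}
	\Vert W\Vert=\sup_{M\in\mathcal{T}}\langle A,M\rangle,\qquad \langle A,M\rangle:=\sum_{i,j}a_{ij}M_{ij},\qquad \mathcal{T}:=\big\{uv^{\top}:u\in B^{\top}B_{2}^{m},\ v\in B_{2}^{n}\big\}.
\end{align*}
Since the $a_{ij}$ are independent, mean zero and $\Vert a_{ij}\Vert_{\psi_{1}}\le K$, Bernstein's inequality gives, for $M,M'\in\mathcal{T}$ and $s\ge 0$,
\begin{align*}
	\textsf{P}\big(|\langle A,M-M'\rangle|\ge s\big)\le 2\exp\Big(-c\min\Big\{\tfrac{s^{2}}{K^{2}\Vert M-M'\Vert_{HS}^{2}},\ \tfrac{s}{K\Vert M-M'\Vert_{\infty}}\Big\}\Big),
\end{align*}
so $(\langle A,M\rangle)_{M\in\mathcal{T}}$ is a process with mixed sub-Gaussian/sub-exponential increments with respect to the metrics $CK\Vert\cdot\Vert_{HS}$ and $CK\Vert\cdot\Vert_{\infty}$; moreover $0\in\mathcal{T}$ and $\langle A,0\rangle=0$. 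Applying the generic chaining bound for such processes (Talagrand; Dirksen) in its $L_{p}$ form gives, with $\gamma_{1},\gamma_{2}$ Talagrand's functionals and $D_{2},D_{\infty}$ the diameters of $\mathcal{T}$ in the two metrics,
\begin{align*}
	\big(\textsf{E}\Vert W\Vert^{p}\big)^{1/p}\le CK\big(\gamma_{2}(\mathcal{T},\Vert\cdot\Vert_{HS})+\gamma_{1}(\mathcal{T},\Vert\cdot\Vert_{\infty})\big)+CK\big(\sqrt{p}\,D_{2}+p\,D_{\infty}\big).
\end{align*}
As $\Vert uv^{\top}\Vert_{HS}=\Vert u\Vert_{2}\Vert v\Vert_{2}\le 1$ (because $\Vert u\Vert_{2}\le\Vert B\Vert\le 1$) and $\Vert uv^{\top}\Vert_{\infty}\le\Vert u\Vert_{\infty}\Vert v\Vert_{\infty}\le 1$, we have $D_{2},D_{\infty}\le 2$, so the last bracket is $\le CKp\le CK(\sqrt{m}+\sqrt{n})p$.

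It remains to prove $\gamma_{2}(\mathcal{T},\Vert\cdot\Vert_{HS})\le C(\sqrt{m}+\sqrt{n})$ and $\gamma_{1}(\mathcal{T},\Vert\cdot\Vert_{\infty})\le C(\sqrt{m}+\sqrt{n})$. The map $(u,v)\mapsto uv^{\top}$ is $1$-Lipschitz from $B^{\top}B_{2}^{m}\times B_{2}^{n}$ into $(\mathcal{T},\Vert\cdot\Vert_{HS})$ when the domain carries the sum of the two Euclidean metrics, and $1$-Lipschitz into $(\mathcal{T},\Vert\cdot\Vert_{\infty})$ when it carries the sum of the two $\ell^{\infty}$ metrics (in both cases from $\Vert uv^{\top}-u'v'^{\top}\Vert\le\Vert u-u'\Vert\,\Vert v\Vert+\Vert u'\Vert\,\Vert v-v'\Vert$ and $\Vert v\Vert,\Vert u'\Vert\le 1$). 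Together with sub-additivity of $\gamma_{\alpha}$ over products, this reduces everything to one-factor estimates. For the Euclidean side, the majorizing-measure theorem gives $\gamma_{2}(B^{\top}B_{2}^{m},\Vert\cdot\Vert_{2})\asymp\textsf{E}\sup_{u\in B^{\top}B_{2}^{m}}\langle g,u\rangle=\textsf{E}\Vert Bg\Vert_{2}\le(\mathrm{tr}\,B^{\top}B)^{1/2}\le\sqrt{m}$ for a standard Gaussian $g$ on $\mathbb{R}^{N}$, and $\gamma_{2}(B_{2}^{n},\Vert\cdot\Vert_{2})\asymp\textsf{E}\Vert g'\Vert_{2}\le\sqrt{n}$. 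For the $\ell^{\infty}$ side I would invoke Talagrand's two-sided bound for canonical processes of i.i.d. symmetric exponential variables $\eta$, whose lower half reads $\gamma_{1}(T,\Vert\cdot\Vert_{\infty})\le C\,\textsf{E}\sup_{t\in T}\langle\eta,t\rangle$; this gives $\gamma_{1}(B^{\top}B_{2}^{m},\Vert\cdot\Vert_{\infty})\le C\,\textsf{E}\Vert B\eta\Vert_{2}\le C(\mathrm{tr}\,B^{\top}B)^{1/2}\le C\sqrt{m}$ and likewise $\gamma_{1}(B_{2}^{n},\Vert\cdot\Vert_{\infty})\le C\sqrt{n}$. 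The decisive fact is $\mathrm{tr}\,B^{\top}B=\mathrm{tr}\,BB^{\top}\le m\Vert B\Vert^{2}\le m$, which contains no $N$. Assembling the pieces yields $\big(\textsf{E}\Vert W\Vert^{p}\big)^{1/p}\le CK(\sqrt{m}+\sqrt{n})+CKp\le CK(\sqrt{m}+\sqrt{n})p$.

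The main obstacle is the bound $\gamma_{1}(\mathcal{T},\Vert\cdot\Vert_{\infty})\le C(\sqrt{m}+\sqrt{n})$. One cannot afford the crude comparison $\gamma_{1}(\mathcal{T},\Vert\cdot\Vert_{\infty})\le\gamma_{1}(\mathcal{T},\Vert\cdot\Vert_{HS})$: after the product bound the latter involves $\gamma_{1}$ of a Euclidean ball of dimension $m$ (or $n$), which is of order $m$ (resp. $n$) and would leave a spurious power of $\min(m,n)$ in the final estimate; similarly, a plain truncation-plus-$\varepsilon$-net argument fails because the union bound over a net costs $m+n$ in the exponent, which the sub-exponential tail of a single linear form cannot absorb. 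It is therefore essential to exploit that the max-entry metric is far weaker than the Hilbert--Schmidt one, and the only soft way I see to do so is the sub-exponential (Laplace) comparison above, which rests on the nontrivial lower-bound half of Talagrand's characterization of such processes; a self-contained alternative would be to build the relevant admissible sequence for the ellipsoid $B^{\top}B_{2}^{m}$ (equivalently, to bound $\gamma_{1}$ of an ellipsoid with respect to $\Vert\cdot\Vert_{\infty}$) by hand. All remaining ingredients---Bernstein's inequality, the $L_{p}$ form of generic chaining for mixed-tail processes, the Lipschitz and product behaviour of the $\gamma$-functionals, and the Gaussian and Laplace second-moment computations---are routine.
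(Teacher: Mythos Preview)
Your approach is correct and takes a genuinely different route from the paper's. The paper never touches generic chaining: after reducing to $m=n<N$, it splits $B$ according to whether its columns are large (norm $\gtrsim n^{-1}\log^{-5/2}n$) or small. The large-column part has at most $\sim n^{3}\log^{5}n$ columns, so $B_{I}A_{I}$ is ``almost square'' and Vershynin's bounded-entry bound (their Lemma~3.2) applies after conditioning on the level of $\max_{i,j}|a_{ij}|$. For the small-column part they truncate the entries of $A$ at height $\sim\sqrt{n}\log n$; the bounded piece again falls to Vershynin's lemma, while for the heavy-tailed remainder they use the Lata\l a--van~Handel--Youssef comparison to pass to $B(g_{ij}g'_{ij})_{N\times n}$, then an $\varepsilon$-net over $S^{n-1}$ combined with Gaussian concentration in the $g$-variables and a Bernstein-type argument in the $g'$-variables. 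The $N$-independence in their proof comes from the same algebraic identity you isolate, $\sum_{i}\Vert B_{i}\Vert_{2}^{2}=\mathrm{tr}\,B^{\top}B\le m$, but they invoke it piecewise inside several estimates rather than once at the level of a $\gamma$-functional.

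Your argument is considerably shorter and more transparent---essentially ``Bernstein increments, mixed-tail chaining, and $\mathrm{tr}(BB^{\top})\le m$''---but the price is exactly the black box you flag: the minoration $\gamma_{1}(T,\Vert\cdot\Vert_{\infty})\le C\,\textsf{E}\sup_{t\in T}\langle\eta,t\rangle$ for i.i.d.\ symmetric exponentials is the lower-bound half of Talagrand's characterization of canonical exponential processes, and sits at the same depth as the majorizing-measure theorem itself. The paper's route avoids any such minoration, using only the easy upper-bound comparison and elementary concentration, at the cost of the decomposition, truncation, and conditioning machinery. In short, you trade technical length for one heavy external theorem; both proofs ultimately eliminate the $N$-dependence through the same trace bound.
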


The above result states that, if the entries of $A$ are independent subexponential entries, then the spectral norm of $W$ is also a subexponential random variable, see Lemma \ref{Lem_2.3} below for details.

More recently, Adamczak, Prochno, Strzelecka and Strzelecki \cite{Adamczak_2} explored the $p$-th moment bound of $\Vert W\Vert$ when $B$ is an $N\times N$ identity matrix. In particular, they showed that (see Proposition 1.16 there)
\begin{align}
	(\textsf{E}\Vert W\Vert^{p})^{1/p}\lesssim \max_{j\le n}\sqrt{\sum_{i\le N}\Vert a_{ij}\Vert_{\psi_{1}}^{2}}p,\nonumber
\end{align}
where $W$ is in the setting of Theorem \ref{Theo_main_weak}. Compared with this result,  our conclusion captures the correct bound order , $\sqrt{m}+\sqrt{n}$ (independent of $N$), with a genereal $B$.

To conclude the Introduction, let us simplify the matrix model and outline the ideas of the proof. First,  we may and do assume that $m=n< N$. Indeed,  if $n<m$ (resp. $m<n$), then we can  extend $A$ (resp. $B$) by adding  $m-n$ zero columns to $A$  (resp. $n-m$ rows to $B$) and note
	 	\begin{align}
		\sqrt{n}+\sqrt{m}\le 2\max\{\sqrt{n}, \sqrt{m}\}\le 2(\sqrt{n}+\sqrt{m}).\nonumber
	\end{align}
 If $N\le n$, then (\ref{1.1}) together with the fact $\Vert BA\Vert\le \Vert B\Vert\Vert A\Vert$ directly yields that
	\begin{align}\label{1.4}
		&\textsf{P}\big\{ \Vert W\Vert> C\max_{i,j}\Vert a_{ij}\Vert_{\psi_{1}}\sqrt{n}t\big\}\nonumber\\ \le&	\textsf{P}\big\{ \Vert A\Vert> \frac{C}{2}\max_{i,j}\Vert a_{ij}\Vert_{\psi_{1}}(\sqrt{n}+\sqrt{N})t\big\}\le e^{-c(\sqrt{n}+\sqrt{N})t},
	\end{align}
 as desired. Hence the main challenge of this paper is how to obtain  nonasymptotic results independent of $N$ when $N>n$.

  Without loss of generality, we can assume each $a_{ij}$ is a symmetric random variable. Indeed,
  let $\tilde{A}=(\tilde{a}_{ij})$ be an independent copy of $A$ and  $\{ \varepsilon_{ij}\}$ be a sequence of  i.i.d. Rademacher random variables independent of $A$ and $\tilde{A}$. Then
 	\begin{align}
 		\textsf{E}\Vert BA\Vert^{p}&=\textsf{E}\big\Vert B(A-\textsf{E}\tilde{A})\big\Vert^{p}=\textsf{E}\big\Vert \textsf{E}\big(B(A-\tilde{A})\big|A\big)\big\Vert^{p}\le \textsf{E}\big\Vert B(A-\tilde{A})\big\Vert^{p}\nonumber\\
 		&=\textsf{E}\big\Vert B\big(\varepsilon_{ij}(a_{ij}-\tilde{a}_{ij})\big)_{N\times n}\big\Vert^{p}\le 2^{p}\textsf{E}\big\Vert B(\varepsilon_{ij}a_{ij})_{N\times n}\big\Vert^{p}.\nonumber
 	\end{align}
 Note that $\varepsilon_{ij}a_{ij}$ is a symmetric subexponential variable. Hence, we can only consider the symmetric case.


 


For the simplified matrix model with $m=n< N$, we next outline the proof of  Theorem \ref{Theo_main_weak}.
 Let $I$ be an index set containing all indexes of large columns (i.e., the column norm is at least $Cn^{-1}\log^{-O(1)}n$)  of $B$,  and let $B_{I}$  be the submatrix of $B$ whose columns are in $I$ and $A_{I}$   the submatrix of $A$ whose rows are in $I$.  By adapting the ideas from Vershynin \cite{ptrfvershynin}, we have the following matrix decomposition
 \begin{align}
	W=BA=B_{I}A_{I}+B_{I^{c}}A_{I^{c}}.  \label{BII}
\end{align}
Since $\Vert B\Vert\le 1$, then   most columns, all but $Cn^{3}\log^{O(1)} n$ columns, are always at most $Cn^{-1}\log^{-O(1)}n$. We call such columns  small. So $B_I$ has at most $Cn^{3}\log^{O(1)} n$ columns left after removing  small columns so that $B_I$ looks like an almost square matrix.

Obviously, by (\ref{BII}) we have for $p\ge 1$
 \begin{align}
(\textsf{E}\Vert W\Vert^{p})^{1/p}\le (\textsf{E}\Vert B_{I}A_{I}\Vert^{p})^{1/p}+(\textsf{E}\Vert B_{I^{c}}A_{I^{c}}\Vert^{p})^{1/p}. \label{BII-2}
\end{align}
Hence we are left to estimate each term in the RHS of (\ref{BII-2}) separately.

As for the almost square matrix $B_{I}A_{I}$,  we shall apply Lemma \ref{lm32}  to estimate  $\|B_{I}A_{I}\|$. The more muscular  subexponential condition contributes to a desired moment bound  of $\Vert B_{I}A_{I}\Vert$ by a conditional argument.  This is presented in Section 3.2 in detail.

Turn to estimating $\|B_{I^{c}}A_{I^{c}}\|$. For the sake of writing, we omit the subscript $I^c$ below.  As discussed above, the $\varepsilon$-net argument cannot easily capture the correct mean order of $\Vert BA\Vert$ in the case of  subexponential entries. The important tool we use is a comparison theorem  due to Latała, van Handel, and Youssef \cite{inventions}; see Lemma \ref{ct} below.  It states that 
\begin{align}
	\textsf{E}\Vert BA\Vert^{p}\le C^{p}\textsf{E}\Vert B(g_{ij}g^{\prime}_{ij})_{N\times n}\Vert^{p},\nonumber
\end{align}
where $\{ g_{ij}, g^{\prime}_{ij}\}$ are independent gaussian variables.

Conditioned on $\{g^{\prime}_{ij}\}$, $(g_{ij}g^{\prime}_{ij})_{N\times n}$ becomes a structured gaussian matrix.  Hence, the $\varepsilon$-net method contributes to a proper bound of $\textsf{E}_{g_{ij}}\Vert BA\Vert^{p}$, which is $\Xi^{ p}_{1}(\sqrt{n}+\sqrt{p})^{p}$ and $\Xi_{1}$ is a random variable containing information about $\{ g^{\prime}_{ij}\}$. In particular,
\begin{align}
	\Xi_{1}^{2}=\sum_{i\le N}(\max_{j\le n}g^{\prime2}_{ij})\Vert B_{i}\Vert^{2}_{2},\nonumber
\end{align}
where $B_{i}$ is the $i$th columns of $B$. 

We only need to give a proper bound of $\Xi_{1}$. Using the comparison theorem again, we can substitute $\max_{j\le n}\eta_{j}$ for $\max_{j\le n}g^{\prime2}_{ij}$ in $\Xi_{1}$, where $\{\eta_{j}\}$ is a sequence of exponential variables with parameter 1.
It is well-known that $\max_{j\le n}\eta_{j}$ is identical in distribution to a linear sum of a sequence of independent chi-square variables, and so Bernstein's inequality is available.  Unfortunately, doing this   shall produce an extra factor $\sqrt{n}$ in controlling $\textsf{E}\Xi_{1}^{p}$. To eliminate the $\sqrt{n}$ term, we need to further appropriately truncate $\Xi_{1}$.  With this procedure, we prove our main result, Theorem \ref{Theo_main_weak}.

 
The rest of this paper is organized as follows: Section 2 recalls some basic concepts and notations and lists some lemmas about the concentration inequalities about subgaussian and subexponential random variables. The so-called comparison theorem is stated as Lemma 2.8. Section 3 is devoted to the proof of our main result.

\section{Preliminaries}
\subsection{Notations}

Unless otherwise stated, we denote by $C, C_{1}, c, c_{1},\cdots$ universal constants which are independent of the dimensions of matrices and the parameters of random variables, and by $C(\varepsilon), C_{1}(\varepsilon),\cdots$ constants that depend only on the parameter $\varepsilon$. Their values can change from line to line.

The standard inner product in $\mathbb{R}^{n}$ is denoted $\left\langle x, y\right\rangle $. For a vector $x\in \mathbb{R}^{n}$, we denote the $l_{p}$ norm by $\Vert x\Vert_{p}=(\sum_{i}x_{i}^{p})^{1/p}$, where $p\ge 1$. The unit ball with respect to $l_{p}$ norm in $\mathbb{R}^{n}$ is denoted by $B_{p}^{n}=\{x: \Vert x\Vert_{p}\le 1\}$, and the corresponding unit sphere in $\mathbb{R}^{n}$ is denoted by $S_{p}^{n-1}=\{x: \Vert x\Vert_{2}= 1\}$.

We write $a\lesssim b$ for convenience if a universal constant $C$ satisfies $a\le Cb$. We write $a\asymp b$ if $a\lesssim b$ and $b\lesssim a$.

\subsection{Concentration of subgaussian variables}

In this subsection, we shall introduce some concentration inequalities of subgaussian variables, particularly, Gaussian and bounded variables, but omit their proofs. The interested reader is  referred to \cite{concentration} for confirmation and more information.

\begin{mylem}[Gaussian concentration]
	Let $f : \mathbb{R}^{m}\to \mathbb{R}$ be a Lipschitz function. Let $g$ be a standard Gaussian random vector in $\mathbb{R}^{m}$. Then for every $t>0$ one has
	\begin{align}
		\textsf{P}\{ f(g)-\textsf{E}f(g)>t\}\le \exp(-c_{0}t^{2}/\Vert f\Vert^{2}_{\text{Lip}}),
	\end{align}
	where $c_{0}\in (0, 1)$ is a universal constant.
\end{mylem}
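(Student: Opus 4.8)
The plan is to prove the stronger statement that the centered variable $f(g)-\textsf{E}f(g)$ has a subgaussian moment generating function, namely
\[
	\textsf{E}\exp\big(\lambda(f(g)-\textsf{E}f(g))\big)\le \exp\big(\tfrac{\pi^{2}}{8}\lambda^{2}\Vert f\Vert_{\text{Lip}}^{2}\big)\qquad\text{for all }\lambda\in\mathbb{R},
\]
and then to deduce the tail bound by the Chernoff--Markov inequality and optimization in $\lambda$. A preliminary reduction handles non-smoothness: replacing $f$ by its convolution $f_{\varepsilon}=f*\gamma_{\varepsilon}$ with a narrow Gaussian kernel yields a $C^{\infty}$ function with $\Vert f_{\varepsilon}\Vert_{\text{Lip}}\le\Vert f\Vert_{\text{Lip}}$ and $f_{\varepsilon}\to f$ uniformly, so both sides of the MGF bound converge as $\varepsilon\to0$; since a Lipschitz function grows at most linearly and the Gaussian has all moments, every expectation below is finite.

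For smooth $f$ I would use the Gaussian interpolation (``smart path'') trick. Let $\tilde g$ be an independent copy of $g$, and for $\theta\in[0,\tfrac{\pi}{2}]$ set $g_{\theta}=g\sin\theta+\tilde g\cos\theta$, so that $g_{\pi/2}=g$, $g_{0}=\tilde g$, and $g_{\theta}':=\frac{d}{d\theta}g_{\theta}=g\cos\theta-\tilde g\sin\theta$. The structural point is that for each fixed $\theta$ the pair $(g_{\theta},g_{\theta}')$ consists of two \emph{independent} standard Gaussian vectors, because the corresponding $2\times2$ rotation in the $(g,\tilde g)$ plane is orthogonal. Writing $f(g)-\textsf{E}f(\tilde g)=\textsf{E}_{\tilde g}\int_{0}^{\pi/2}\langle\nabla f(g_{\theta}),g_{\theta}'\rangle\,d\theta$ and applying Jensen's inequality first in $\tilde g$ and then with respect to the probability measure $\tfrac{2}{\pi}\mathbf{1}_{[0,\pi/2]}\,d\theta$, followed by Fubini, I obtain
\[
	\textsf{E}\exp\big(\lambda(f(g)-\textsf{E}f(g))\big)\le \frac{2}{\pi}\int_{0}^{\pi/2}\textsf{E}\exp\Big(\tfrac{\pi}{2}\lambda\,\langle\nabla f(g_{\theta}),g_{\theta}'\rangle\Big)\,d\theta .
\]
Conditioning on $g_{\theta}$ inside the integrand, $\langle\nabla f(g_{\theta}),g_{\theta}'\rangle$ is centered Gaussian with variance $\Vert\nabla f(g_{\theta})\Vert_{2}^{2}\le\Vert f\Vert_{\text{Lip}}^{2}$, so the inner expectation is at most $\exp(\tfrac{\pi^{2}}{8}\lambda^{2}\Vert f\Vert_{\text{Lip}}^{2})$ uniformly in $\theta$, which gives the claimed MGF bound. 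Then Markov's inequality applied to $\exp(\lambda(f(g)-\textsf{E}f(g)))$ with $\lambda=4t/(\pi^{2}\Vert f\Vert_{\text{Lip}}^{2})$ produces the tail $\exp(-2t^{2}/(\pi^{2}\Vert f\Vert_{\text{Lip}}^{2}))$, i.e. the statement with $c_{0}=2/\pi^{2}\in(0,1)$.

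I expect the genuinely delicate points to be bookkeeping rather than conceptual: checking that mollification preserves the Lipschitz constant and permits passage to the limit on both sides (uniform integrability, supplied by the Gaussian tails), the verification that $(g_{\theta},g_{\theta}')$ is a jointly independent standard Gaussian pair, and the correctly ordered applications of Jensen and Fubini in the interpolation identity. An alternative route is the Herbst argument via the Gaussian logarithmic Sobolev inequality --- apply log-Sobolev to $e^{\lambda f/2}$ to get a first-order differential inequality for $\lambda\mapsto\tfrac{1}{\lambda}\log\textsf{E}e^{\lambda(f-\textsf{E}f)}$ and integrate --- but since establishing log-Sobolev essentially requires the same semigroup/interpolation input, the direct path above seems most economical.
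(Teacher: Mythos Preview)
The paper does not prove this lemma: it explicitly states that the concentration inequalities in Section~2.2 are quoted without proof, referring the reader to \cite{concentration}. So there is nothing in the paper to compare your argument against.

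That said, your proposal is correct and is the classical Maurey--Pisier interpolation proof. The reduction to smooth $f$ by mollification is routine; the key structural fact that $(g_{\theta},g_{\theta}')$ is a pair of independent standard Gaussians holds because $\begin{pmatrix}\sin\theta & \cos\theta\\ \cos\theta & -\sin\theta\end{pmatrix}$ is orthogonal; the chain of Jensen applications (first in $\tilde g$, then in the uniform measure on $[0,\pi/2]$) is in the right order; and the conditional computation of the inner Gaussian MGF is exact. The resulting constant $c_{0}=2/\pi^{2}$ indeed lies in $(0,1)$. Your remark that the Herbst/log-Sobolev route gives the same conclusion is also accurate, and both proofs appear in the reference \cite{concentration} that the paper cites.
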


The subsequent concentration inequality is for bounded random variables, known as Talagrand's concentration inequality.

\begin{mylem}
	Let $X_{1},\cdots, X_{m}$ be independent random variables such that $\vert X_{i}\vert\le K$ for all $i$. Let $f : \mathbb{R}^{m}\to \mathbb{R}$ be a convex and $1$-Lipschitz function. Then for every $t>0$ one has
	\begin{align}
		\textsf{P}\{ \vert f(X_{1}, \cdots, X_{m})-\textsf{E}f(X_{1}, \cdots, X_{m})\vert>Kt\}\le 4\exp(-t^{2}/4).
	\end{align}
\end{mylem}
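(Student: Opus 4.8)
The plan is to obtain this estimate from Talagrand's convex distance inequality, which is the textbook route (and the one \cite{concentration} follows). Write $X=(X_{1},\dots,X_{m})$ with product law $\mu=\mu_{1}\otimes\cdots\otimes\mu_{m}$ supported in $[-K,K]^{m}$, and for a Borel set $A\subseteq\mathbb{R}^{m}$ define the convex distance
\[
 d_{T}(x,A)=\sup_{\Vert\alpha\Vert_{2}\le 1}\ \inf_{y\in A}\ \sum_{i=1}^{m}\alpha_{i}\,\mathbf{1}\{x_{i}\ne y_{i}\}.
\]
The first ingredient I would invoke, as a black box or by reproving it via induction on $m$, is the exponential bound
\[
 \textsf{E}\exp\Big(\tfrac14\,d_{T}(X,A)^{2}\Big)\le \frac{1}{\mu(A)},
\]
valid for every product measure and every $A$ with $\mu(A)>0$. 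The second ingredient is a geometric lemma linking convex Lipschitz functions to $d_{T}$, which I spell out next.

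\emph{Geometric lemma.} Fix $a\in\mathbb{R}$ and put $A=\{y:f(y)\le a\}$. Using the minimax characterization $d_{T}(x,A)=\min\{\Vert v\Vert_{2}:v\in\operatorname{conv}\{s(x,y):y\in A\}\}$ with $s(x,y)_{i}=\mathbf{1}\{x_{i}\ne y_{i}\}$, pick $y^{(1)},\dots,y^{(k)}\in A$ and weights $\lambda_{\ell}\ge 0$, $\sum_{\ell}\lambda_{\ell}=1$, with $\big\Vert\sum_{\ell}\lambda_{\ell}s(x,y^{(\ell)})\big\Vert_{2}=d_{T}(x,A)$. Let $z=\sum_{\ell}\lambda_{\ell}y^{(\ell)}$. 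Convexity of $f$ gives $f(z)\le\sum_{\ell}\lambda_{\ell}f(y^{(\ell)})\le a$, while for each coordinate $\abs{(x-z)_{i}}\le 2K\big(\sum_{\ell}\lambda_{\ell}s(x,y^{(\ell)})\big)_{i}$ since coordinates that differ both lie in an interval of length $2K$; hence $\Vert x-z\Vert_{2}\le 2K\,d_{T}(x,A)$. The $1$-Lipschitz bound then yields $f(x)\le f(z)+\Vert x-z\Vert_{2}\le a+2K\,d_{T}(x,A)$, and this holds for all $x$ (trivially when $x\in A$).

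\emph{Putting it together.} Let $M$ be a median of $f(X)$. Taking $a=M$ makes $\mu(A)\ge\tfrac12$ and $f(X)-M\le 2K\,d_{T}(X,A)$ surely, so the exponential bound gives $\textsf{P}\{f(X)-M>Ks\}\le 2e^{-s^{2}/16}$. For the lower tail apply the same two ingredients with $a=M-s$: on the event $\{f(X)\ge M\}$, of probability $\ge\tfrac12$, one has $d_{T}(X,\{f\le M-s\})\ge s/(2K)$, whence $\tfrac12 e^{s^{2}/(16K^{2})}\le\textsf{E}\exp(\tfrac14 d_{T}(X,\{f\le M-s\})^{2})\le 1/\mu(\{f\le M-s\})$, i.e. $\textsf{P}\{f(X)<M-Ks\}\le 2e^{-s^{2}/16}$. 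Summing the two tails and replacing the median by the mean, via $\abs{M-\textsf{E}f(X)}\le\textsf{E}\abs{f(X)-M}=\int_{0}^{\infty}\textsf{P}\{\abs{f(X)-M}>u\}\,du\le C_{0}K$ and reabsorbing the shift $C_{0}K$ into the numerical constant, produces $\textsf{P}\{\abs{f(X)-\textsf{E}f(X)}>Kt\}\le 4e^{-t^{2}/4}$ once the constants are optimized (a more careful bookkeeping, or the Herbst/entropy method applied to $\textsf{E}e^{\lambda f}$ with a sub-additivity estimate adapted to convex functions, recovers the exact exponent $1/4$).

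\emph{Main obstacle.} The genuinely hard step is the convex distance inequality $\textsf{E}\exp(\tfrac14 d_{T}(X,A)^{2})\le 1/\mu(A)$ itself: its proof runs by a one-coordinate-at-a-time induction on $m$ and needs a careful choice of the interpolating competitor set together with a non-obvious convexity inequality to close the recursion. Everything downstream — the geometric lemma and the median-to-mean passage — is routine.
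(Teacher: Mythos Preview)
The paper does not prove this lemma at all: Section~2.2 explicitly says the concentration inequalities are stated with proofs omitted, referring the reader to \cite{concentration}. So there is no in-paper argument to compare against; the lemma is quoted as a black box.

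Your proposal is the standard textbook derivation (and essentially the one in \cite{concentration}): Talagrand's convex distance inequality $\textsf{E}\exp(\tfrac14 d_{T}(X,A)^{2})\le 1/\mu(A)$, then the geometric step that turns convexity plus $1$-Lipschitz plus $\vert X_{i}\vert\le K$ into $f(x)\le a+2K\,d_{T}(x,\{f\le a\})$, then concentration around the median, then median-to-mean. The outline is correct and the ``main obstacle'' you identify is the right one.

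One honest caveat on constants: the route you wrote down delivers $4e^{-t^{2}/16}$ around the median, not $4e^{-t^{2}/4}$. Your parenthetical ``a more careful bookkeeping\ldots recovers the exact exponent $1/4$'' is doing real work here; getting from $1/16$ to $1/4$ is not just tidying --- it requires either a sharper version of the convex distance bound or switching to the entropy method. Since the paper only uses this lemma qualitatively (any sub-Gaussian exponent suffices for the applications in Lemma~3.1 and Theorem~3.2), the precise constant is immaterial for the paper's purposes, but if you want to match the stated inequality exactly you should not leave that step as an aside.
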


\subsection{Subexponentional random variables}

\begin{mylem}[Proposition 2.7.1 in \cite{highdimension}]\label{Lem_2.3}
	Let $\eta$ be a subexponential random variable. Then the following properties are equivalent.
	
	(i)  $\textsf{P}\{\vert \eta\vert\ge t \}\le 2\exp(-t/K_{1})$ for all $t\ge 0$;
	
	(ii)	 $(\textsf{E}\vert \eta\vert^{p})^{1/p}\le K_{2}p$ for all $p\ge 1$;
	
	(ii)  $\textsf{E}\exp(\lambda\vert\eta\vert)\le \exp(K_{3}\lambda)$ for all $\lambda$ such that $0\le \lambda\le \frac{1}{K_{3}}$;
	
	(iv) $\textsf{E}\exp(\vert\eta\vert/K_{4})\le2$.

	(v) If $\textsf{E}\eta=0$, we also have the following equivalent property:
	
	$\textsf{E}\exp(\lambda X)\le \exp(K_{5}^{2}\lambda^{2})$  for all $\lambda$ such that $\vert \lambda\vert\le \frac{1}{K_{5}}$.
	
\end{mylem}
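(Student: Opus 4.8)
The statement in question is Lemma~\ref{Lem_2.3}, the standard equivalence characterizing subexponential random variables. The plan is to prove it by a cyclic chain of implications $(i)\Rightarrow(ii)\Rightarrow(iii)\Rightarrow(iv)\Rightarrow(i)$, together with the separate equivalence of $(v)$ with the rest under the mean-zero assumption. Since these are classical estimates, I would not grind through every constant; rather I would track how each $K_j$ depends on the previous one, since that dependence (always up to universal multiplicative constants) is exactly what the lemma is really asserting.

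First I would show $(i)\Rightarrow(ii)$ by integrating the tail bound: for $p\ge 1$, write $\textsf{E}\abs{\eta}^p = \int_0^\infty p t^{p-1}\textsf{P}\{\abs{\eta}\ge t\}\,dt \le 2p\int_0^\infty t^{p-1}e^{-t/K_1}\,dt = 2p\,K_1^p\,\Gamma(p)$, and then bound $(2p\,\Gamma(p))^{1/p}\le C p$ using $\Gamma(p)\le p^p$ and $2^{1/p}\le 2$; this yields $K_2 \le CK_1$. For $(ii)\Rightarrow(iii)$ I would expand the exponential as a power series, $\textsf{E}\exp(\lambda\abs{\eta}) = \sum_{k\ge 0}\lambda^k\textsf{E}\abs{\eta}^k/k! \le 1 + \sum_{k\ge 1}(\lambda K_2 k)^k/k!$, use $k^k/k!\le e^k$ to get a geometric series $\sum_{k\ge 1}(e\lambda K_2)^k$ that converges and is bounded by, say, $2e\lambda K_2$ when $\lambda \le 1/(2eK_2)$, and finally absorb this into $\exp(K_3\lambda)$ via $1+x\le e^x$, giving $K_3\asymp K_2$. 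The implication $(iii)\Rightarrow(iv)$ is immediate by taking $\lambda = 1/K_3$ (possibly after rescaling $K_3$ by a universal factor so that $\exp(K_3\lambda)=e\le 2$ becomes $\le 2$), and $(iv)\Rightarrow(i)$ is Markov's inequality: $\textsf{P}\{\abs{\eta}\ge t\} = \textsf{P}\{e^{\abs{\eta}/K_4}\ge e^{t/K_4}\}\le e^{-t/K_4}\textsf{E}e^{\abs{\eta}/K_4}\le 2e^{-t/K_4}$, so $K_1\le K_4$.

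For part $(v)$, assuming $\textsf{E}\eta=0$, I would derive $\textsf{E}\exp(\lambda\eta)\le\exp(K_5^2\lambda^2)$ from, say, $(iii)$: expand $\textsf{E}e^{\lambda\eta} = 1 + \lambda\textsf{E}\eta + \sum_{k\ge 2}\lambda^k\textsf{E}\eta^k/k!$, note the linear term vanishes, bound $\abs{\textsf{E}\eta^k}\le\textsf{E}\abs{\eta}^k\le (K_2 k)^k$, and control $\sum_{k\ge 2}(\abs{\lambda}K_2 k)^k/k!\le \sum_{k\ge 2}(e\abs{\lambda}K_2)^k \le C\lambda^2 K_2^2$ for $\abs{\lambda}$ small enough (the sum starts at $k=2$, which is what produces the $\lambda^2$), then use $1+x\le e^x$. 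Conversely, from $(v)$ one recovers $(ii)$ for even moments by the standard optimization $\textsf{E}\abs{\eta}^p \le$ (moment generating bound) and then extends to all $p$ by monotonicity of $L_p$ norms; combined with the cycle this closes the equivalence.

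The main ``obstacle'' here is purely bookkeeping rather than conceptual: the whole content is in making sure that at each step the new constant is a universal multiple of the old one, and in handling the two mild annoyances --- the restricted range of $\lambda$ in $(iii)$ and $(v)$ (one must be careful that the series manipulations are only valid, and only needed, in that range), and the cosmetic rescalings needed to turn a bound like ``$\le e$'' into ``$\le 2$''. There is no single hard estimate; the proof is a routine but careful assembly of Markov's inequality, tail integration, and power-series expansion of the exponential.
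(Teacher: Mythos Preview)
Your proposal is correct and follows the standard textbook argument. Note, however, that the paper does not actually prove this lemma: it is stated as Proposition~2.7.1 of \cite{highdimension} and quoted without proof, so there is nothing in the paper to compare your argument against beyond confirming that your cycle $(i)\Rightarrow(ii)\Rightarrow(iii)\Rightarrow(iv)\Rightarrow(i)$ together with the separate treatment of $(v)$ is exactly the proof one finds in that reference.
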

We remark that the parameters $K_{1},\cdots, K_{5}$ that appeared in the above lemma  are not universal constants. In fact, $K_{i}=C_{i}\Vert \eta\Vert_{\psi_{1}}$ for $i=1,\cdots, 5$, where $C_{1},\cdots ,C_{5}$ are universal constants. Moreover, $K_{i}\lesssim K_{j}$ for any $i, j\in \{1, 2, \cdots, 5\}$.

The following lemma, Bernstein's inequality, is a concentration inequality for sums of independent subexponential random variables.

\begin{mylem}[Theorem 2.8.1 in \cite{highdimension}]\label{Lem_Berstein}
	Let $\eta_{1},\cdots,\eta_{n}$ be independent mean zero subexponential random variables, and let $a=(a_{1},\cdots, a_{n}) \in \mathbb{R}^{n}$. Then, for every $t\ge 0$, we have
	\begin{align}
		\textsf{P}\big\{ \sum_{i=1}^{n}a_{i}\eta_{i}\ge t\big\}\le \exp\big(-c\min(\frac{t^{2}}{K^{2}\Vert a\Vert_{2}^{2}}, \frac{t}{K\Vert a\Vert_{\infty}})\big),\nonumber
	\end{align}
	where $K=\max_{i}\Vert \eta_{i}\Vert_{\psi_{1}}$.
\end{mylem}

The next lemma is an interesting observation on the order statistics of $n$ independent exponential distributed random variables.   For the sake of completeness, we include a simple proof.
\begin{mylem}
	Let $\eta_{1},\cdots, \eta_{n}$ be a sequence of independent exponential random variables with parameter $\lambda =1$. Denote their order statistics by $\eta_{(1)}\le \cdots \le \eta_{(n)}$. Consider the following system of linear transformations,
	\begin{align}
		T_{1}=2n\eta_{(1)},\,  T_{2}=2(n-1)(\eta_{(2)}-\eta_{(1)}),\,\cdots, \, T_{n}=2(\eta_{(n)}-\eta_{(n-1)}).\nonumber
	\end{align}
	Then $\{T_{i}\}$ are   independent chi-square  random variables with degrees of freedom  2.
\end{mylem}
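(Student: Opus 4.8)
The plan is to use the classical Rényi representation of exponential order statistics in terms of the normalized spacings. Recall that if $\eta_1,\dots,\eta_n$ are i.i.d. $\mathrm{Exp}(1)$ with order statistics $\eta_{(1)}\le\cdots\le\eta_{(n)}$, then the spacings $S_k:=\eta_{(k)}-\eta_{(k-1)}$ (with $\eta_{(0)}:=0$) satisfy that the scaled spacings $(n-k+1)S_k$, $k=1,\dots,n$, are themselves i.i.d. $\mathrm{Exp}(1)$ and in particular independent. From this the statement follows immediately, since $T_k = 2(n-k+1)S_k$ and multiplying an $\mathrm{Exp}(1)$ variable by $2$ yields a $\chi^2_2$ variable (the $\chi^2_2$ law being exactly $\mathrm{Exp}(1/2)$, i.e. density $\tfrac12 e^{-x/2}$ on $(0,\infty)$).

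Concretely, I would carry out the proof in three steps. First, write down the joint density of $(\eta_{(1)},\dots,\eta_{(n)})$: on the region $0<y_1<\cdots<y_n$ it equals $n!\,\exp(-\sum_{i=1}^n y_i)$, as follows from the standard order-statistics formula $f_{(1,\dots,n)}(y)=n!\prod f(y_i)$ for i.i.d. samples. Second, perform the linear change of variables $y_k\mapsto s_k$ where $s_k=y_k-y_{k-1}$ (so $y_k=\sum_{j\le k}s_j$), which maps the ordered region bijectively onto $\{s_1,\dots,s_n>0\}$ and has Jacobian determinant $1$ (the transformation is lower-triangular with unit diagonal). Using the identity $\sum_{i=1}^n y_i=\sum_{k=1}^n (n-k+1)s_k$, the joint density of $(S_1,\dots,S_n)$ becomes $n!\,\exp\!\big(-\sum_{k=1}^n(n-k+1)s_k\big)$ on the positive orthant. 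Third, substitute $t_k=2(n-k+1)s_k$, i.e. $s_k=t_k/(2(n-k+1))$, whose Jacobian is $\prod_{k=1}^n \tfrac{1}{2(n-k+1)}=\tfrac{1}{2^n n!}$; the density of $(T_1,\dots,T_n)$ is then $n!\cdot\tfrac{1}{2^n n!}\exp\!\big(-\tfrac12\sum_{k=1}^n t_k\big)=\prod_{k=1}^n \tfrac12 e^{-t_k/2}$ on $(0,\infty)^n$. This product form simultaneously establishes independence and identifies each marginal as the $\chi^2_2$ density, completing the proof.

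There is essentially no serious obstacle here; the only points requiring a little care are bookkeeping ones. One should verify the algebraic identity $\sum_{i=1}^n y_i=\sum_{k=1}^n(n-k+1)s_k$ by swapping the order of summation ($\sum_i y_i=\sum_i\sum_{j\le i}s_j=\sum_j s_j\sum_{i\ge j}1=\sum_j(n-j+1)s_j$), and one should confirm that both changes of variables are genuine bijections with the claimed triangular/diagonal Jacobians. It is also worth noting the indexing convention matches the statement: $T_1=2n\eta_{(1)}$ corresponds to $k=1$ giving coefficient $2(n-1+1)=2n$, and $T_n=2(\eta_{(n)}-\eta_{(n-1)})$ to $k=n$ giving coefficient $2(n-n+1)=2$, consistent with the displayed formulas. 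An alternative to the Jacobian computation would be to recognize the Rényi representation directly (the memoryless property: given $\eta_{(1)}$, the remaining $n-1$ points shifted by $\eta_{(1)}$ are again i.i.d. $\mathrm{Exp}(1)$, so $n\eta_{(1)}\sim\mathrm{Exp}(1)$ and is independent of the spacings among the rest, then induct), but the single change-of-variables computation is cleaner and self-contained, so that is the route I would take.
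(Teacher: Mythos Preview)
Your proposal is correct and follows essentially the same route as the paper: both start from the joint density $n!\,e^{-\sum_i x_i}\mathbf{1}_{\{x_1\le\cdots\le x_n\}}$ of the order statistics and exploit the identity $\sum_i x_i=\sum_k (n-k+1)(x_k-x_{k-1})$. The paper's proof is in fact only a two-line sketch that stops after recording this identity, whereas you carry out the Jacobian computation in full and explicitly identify the resulting product density; your version is a strictly more complete execution of the same argument.
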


\begin{myrem}
	It is easy to see
	\begin{align}
		\eta_{(n)}=\sum_{i=1}^{n}\frac{T_{i}}{2(n-i+1)},\nonumber
	\end{align}
	which in turn  implies
	\begin{align}
		\textsf{E}\eta_{(n)}=\textsf{E}\sum_{i=1}^{n}\frac{T_{i}}{2(n-i+1)}=\sum_{i=1}^{n}\frac{1}{n-i+1}=\sum_{i=1}^{n}\frac{1}{i}\asymp\log n.\nonumber
	\end{align}
\end{myrem}
\begin{proof}
	Denote by $f(x_{1}, \cdots, x_{n})$ the joint density function of $(\eta_{(1)}, \cdots, \eta_{(n)})$. Then, we have
	\begin{align}
		f(x_{1}, \cdots, x_{n})=n!\cdot\exp\big(-\sum_{i\le n}x_{i}\big)\mathbf{1}_{\{ x_{1}\le \cdots \le x_{n} \}}.\nonumber
	\end{align}
	Note that
	\begin{align}
		\exp(-\sum_{i\le n}x_{i})=\exp\big((x_{n}-x_{n-1})+2(x_{n-1}-x_{n-2})+ \cdots +nx_{1}\big).\nonumber
	\end{align}
	
\end{proof}

\subsection{Moments and Tails}

In this subsection, we introduce some connections between tails and moments and present a fundamental lemma, i.e., a comparison theorem proposed by Latała, van Handel, and Youssef \cite{inventions}.  It may be regarded as an analog of the classic contraction principle; see Lemma 4.6 in Ledoux and Talagrand \cite{proinbanach}.

\begin{mylem}[Lemma 4.6 in \cite{inventions}]
	Let $h$ be a random variable such that
	\begin{align}\label{jutiaojian}
		K_{1}p^{\beta}\le (\textsf{E}\vert h\vert^{p})^{1/p}\le K_{2}p^{\beta}, \quad\text{for all}\,\, p\ge 2.
	\end{align}
	Then there exist constants $K_{3}, K_{4}$ depending only on $K_{1}, K_{2}, \beta$ such that
	\begin{align}
		K_{3}e^{-t^{1/\beta}/K_{3}}\le \textsf{P}\{\vert h\vert\ge t\}\le K_{4}e^{-t^{1/\beta}/K_{4}},\quad\text{for all}\,\, p\ge 2.\nonumber
	\end{align}
\end{mylem}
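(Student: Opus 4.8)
The plan is to derive the two tail estimates separately, each from one side of the moment hypothesis \eqref{jutiaojian}: the upper tail from the upper moment bound, and the lower tail from the lower moment bound together with the upper one applied at the exponent $2p$. Both are instances of the classical passage between polynomial moment growth ($\asymp p^{\beta}$) and stretched-exponential tails ($\asymp e^{-t^{1/\beta}}$), and the constants produced will visibly depend only on $K_{1},K_{2},\beta$.

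For the \emph{upper tail} I would apply Markov's inequality in the form
\[
\textsf{P}\{|h|\ge t\}\le\frac{\textsf{E}|h|^{p}}{t^{p}}\le\Big(\frac{K_{2}p^{\beta}}{t}\Big)^{p},
\]
valid for every real $p\ge 2$, and then optimise the free parameter. Choosing $p=(t/(eK_{2}))^{1/\beta}$ whenever this quantity is $\ge 2$ makes $K_{2}p^{\beta}/t=1/e$, so the bound collapses to $e^{-p}=\exp\big(-(t/(eK_{2}))^{1/\beta}\big)$, which has the required form $K_{4}e^{-t^{1/\beta}/K_{4}}$ after renaming constants; for the remaining bounded range of $t$ one just uses $\textsf{P}\{|h|\ge t\}\le 1$ and absorbs it by enlarging $K_{4}$.

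For the \emph{lower tail} I would run a Paley--Zygmund argument. For an integer $p\ge 2$, Cauchy--Schwarz gives
\[
\textsf{E}|h|^{p}=\textsf{E}|h|^{p}\mathbf{1}_{\{|h|\le s\}}+\textsf{E}|h|^{p}\mathbf{1}_{\{|h|>s\}}\le s^{p}+\big(\textsf{E}|h|^{2p}\big)^{1/2}\,\textsf{P}\{|h|>s\}^{1/2},
\]
and taking $s=s_{p}:=2^{-1/p}(\textsf{E}|h|^{p})^{1/p}$, so that $s_{p}^{p}=\tfrac12\textsf{E}|h|^{p}$, and inserting $\textsf{E}|h|^{p}\ge(K_{1}p^{\beta})^{p}$ and $\textsf{E}|h|^{2p}\le(K_{2}(2p)^{\beta})^{2p}$ yields
\[
\textsf{P}\{|h|>s_{p}\}\ge\frac{(\textsf{E}|h|^{p})^{2}}{4\,\textsf{E}|h|^{2p}}\ge\frac14\Big(\frac{K_{1}}{2^{\beta}K_{2}}\Big)^{2p}=:\frac14 c^{\,p},\qquad c=\Big(\frac{K_{1}}{2^{\beta}K_{2}}\Big)^{2}\in(0,1),
\]
where $c<1$ because the hypothesis forces $\beta>0$ and $K_{1}\le K_{2}$ (the $L_{p}$ norm is nondecreasing in $p$). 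Since $\tfrac12 K_{1}p^{\beta}\le s_{p}\le K_{2}p^{\beta}$, the scale $s_{p}$ and the exponent satisfy $p\asymp s_{p}^{1/\beta}$, so $\tfrac14 c^{\,p}\ge K_{3}e^{-s_{p}^{1/\beta}/K_{3}}$ for a suitable $K_{3}=K_{3}(K_{1},K_{2},\beta)$. The one point that needs a little care is the passage from this discrete family of scales to arbitrary $t$: for $t$ between consecutive scales $s_{p}$ and $s_{p+1}$ one invokes monotonicity of $t\mapsto\textsf{P}\{|h|\ge t\}$ together with the bound $s_{p+1}/s_{p}\le 2^{1+\beta}K_{2}/K_{1}$, which transfers the estimate to every $t$ above a threshold, and then to all $t>0$ after shrinking $K_{3}$. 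That interpolation — making sure the loss stays hidden in constants depending only on $K_{1},K_{2},\beta$ — is the only non-mechanical part of the argument.
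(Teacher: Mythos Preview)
Your proposal is correct and follows exactly the approach indicated in the paper: the paper does not give a full proof of this lemma (it is cited from \cite{inventions}) but only sketches that the upper tail follows from Markov's inequality and the lower tail from the Paley--Zygmund inequality, which is precisely what you carry out in detail. Your added care with the optimisation in $p$ and the interpolation between the discrete scales $s_{p}$ simply fleshes out what the paper leaves implicit.
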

Indeed, if $h$ satisfies for $p\ge 2$
\begin{align}
	(\textsf{E}\vert h\vert^{p})^{1/p}\le K_{2}p^{\beta},\nonumber
\end{align}
then Markov's inequality  immediately yields the upper tail
\begin{align}
	\textsf{P}\{\vert h\vert\ge t\}\le K_{4}e^{-t^{1/\beta}/K_{4}}.\nonumber
\end{align}
In order to bound the lower tail, one can invoke the Paley-Zygmund inequality, which in turn requires that $h$ satisfies the following two-side inequalities
\begin{align}
	K_{1}p^{\beta}\le (\textsf{E}\vert h\vert^{p})^{1/p}\le K_{2}p^{\beta}, \quad\text{for all}\,\, p\ge 2.\nonumber
\end{align}

\begin{mylem}[Lemma A.1 in \cite{ejp}]\label{Lem_need}
	If $\eta$ is a random variable satisfying for $p\ge 1$
	\begin{align}
		(\textsf{E}\vert \eta\vert^{p})^{1/p}\le a_{1}p+a_{2}\sqrt{p}+a_{3},\nonumber
	\end{align}
	where $0\le a_{1}, a_{2}, a_{3}<\infty$. Then, we have for $u\ge 1$
	\begin{align}
		\textsf{P}\big\{\vert \eta\vert\ge C(a_{1}u+a_{2}\sqrt{u}+a_{3}) \big\}\le e^{-u}.\nonumber
	\end{align}
\end{mylem}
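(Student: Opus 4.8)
The plan is to obtain the tail bound directly from the moment hypothesis through Markov's inequality, with the moment order chosen as a function of $u$. Fix $u\ge 1$ and take $p=u$ in the assumption; since $p\ge 1$ this gives $(\textsf{E}\abs{\eta}^{p})^{1/p}\le a_{1}p+a_{2}\sqrt{p}+a_{3}$. For any $t>0$, applying Markov's inequality to the nonnegative random variable $\abs{\eta}^{p}$ yields
\begin{align}
	\textsf{P}\{\abs{\eta}\ge t\}=\textsf{P}\{\abs{\eta}^{p}\ge t^{p}\}\le\frac{\textsf{E}\abs{\eta}^{p}}{t^{p}}\le\Big(\frac{a_{1}p+a_{2}\sqrt{p}+a_{3}}{t}\Big)^{p}.\nonumber
\end{align}

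Next I would calibrate $t$ so that the right-hand side is exactly $e^{-u}$. Assuming $a_{1}u+a_{2}\sqrt{u}+a_{3}>0$ (otherwise $\eta=0$ almost surely and the claim is trivial), set $t=e\,(a_{1}u+a_{2}\sqrt{u}+a_{3})$; since $p=u$, the ratio inside the parentheses equals $e^{-1}$, so $\textsf{P}\{\abs{\eta}\ge t\}\le e^{-p}=e^{-u}$. This is precisely the desired inequality with the universal constant $C=e$.

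There is essentially no obstacle here: the argument is a one-line Markov estimate. The only structural remark is that the form of the conclusion --- a universal constant multiplying $a_{1}u+a_{2}\sqrt{u}+a_{3}$ --- is tailored to the $a_{1}p+a_{2}\sqrt{p}+a_{3}$ shape of the moment bound, so the single substitution $p=u$ suffices and no case distinction among the three terms is required. If one prefers to assume the moment bound only for $p\ge 2$, the same proof gives the tail bound for $u\ge 2$, and the remaining range $1\le u<2$ is absorbed into the constant $C$ since there $\textsf{P}\{\abs{\eta}\ge t\}\le e^{-u}$ is implied by a trivial enlargement of $t$.
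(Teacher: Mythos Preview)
Your proof is correct: setting $p=u$ and applying Markov's inequality with threshold $t=e\,(a_{1}u+a_{2}\sqrt{u}+a_{3})$ yields $\textsf{P}\{\abs{\eta}\ge t\}\le e^{-u}$, which is exactly the claim with $C=e$. The paper does not supply its own proof of this lemma --- it is quoted from \cite{ejp} --- and your argument is the standard one-line Markov computation that underlies that reference as well.
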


The following result is   an extension of Lemma 4.7 in \cite{inventions}, which establishes the comparison principle for arbitrary random variables with same-level central moments. 
The difference between Lemma \ref{ct} and Lemma 4.7  in \cite{inventions} is that, we only control the moments of   $h_i$  from the above here.

\begin{mylem}[Comparison Theorem]\label{ct}
	Let $h_{i}$ and $h_{i}'$, $i=1, \cdots, n$ be independent centered random variables such that
	\begin{align}
		(\textsf{E}\vert h_{i}\vert^{p})^{1/p}\le K_{0}p^{\beta}, \qquad  K_{1}p^{\beta}	\le(\textsf{E}\vert h_{i}'\vert^{p})^{1/p}\le K_{2}p^{\beta}\nonumber
	\end{align}
	for all $p\ge 2$ and $i=1,\cdots, n$. Then there exists a constant $K$ depending only on $K_{0}, K_{1}, K_{2}$, and $ \beta$ such that
	\begin{align}
		\textsf{E}f(h_{1},\cdots,h_{n})\le \textsf{E}f(Kh_{1}',\cdots,Kh_{n}' )\nonumber
	\end{align}
	for every symmetric convex function $f: \mathbb{R}^{n}\to \mathbb{R}$.
\end{mylem}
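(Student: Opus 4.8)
The plan is to deduce Lemma~\ref{ct} from the original comparison principle of Latała, van Handel, and Youssef (Lemma~4.7 in \cite{inventions}) by a symmetrization trick that manufactures, from each one-sided bounded variable $h_i$, a companion variable whose central moments are comparable at \emph{both} ends to $p^\beta$, so that the two-sided hypothesis required by their theorem is met. Concretely, for each $i$ I would introduce an independent copy $\tilde h_i$ of $h_i$ and form the symmetrized variable $\bar h_i := h_i - \tilde h_i$. Since $h_i$ is centered, Jensen's inequality gives $\textsf{E}f(h_1,\dots,h_n)=\textsf{E}f\big(\textsf{E}(\bar h_1\mid h_1),\dots\big)\le \textsf{E}f(\bar h_1,\dots,\bar h_n)$ for every convex $f$ (this is exactly the desymmetrization step the paper already uses in the Introduction), so it suffices to bound $\textsf{E}f(\bar h_1,\dots,\bar h_n)$.

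Next I would establish the two-sided moment control for $\bar h_i$. The upper bound $(\textsf{E}|\bar h_i|^p)^{1/p}\le 2(\textsf{E}|h_i|^p)^{1/p}\le 2K_0 p^\beta$ is immediate from the triangle inequality in $L_p$. For the matching lower bound, I would use the Paley–Zygmund / symmetrization inequality in the form $(\textsf{E}|\bar h_i|^p)^{1/p}\ge c\,(\textsf{E}|h_i|^p)^{1/p}$ valid for symmetrized variables together with a uniform \emph{lower} bound on $(\textsf{E}|h_i|^p)^{1/p}$. The latter is the one genuinely delicate point: the hypothesis only gives an upper bound $K_0 p^\beta$ on the moments of $h_i$, so a priori $h_i$ could be degenerate. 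Here I would invoke the $h_i'$: since $(\textsf{E}|h_i'|^p)^{1/p}\ge K_1 p^\beta$, I would first apply Lemma~4.7 of \cite{inventions} in a trivial/degenerate way to transport the $h_i$ side — or, more cleanly, I would simply replace $h_i$ by $h_i + \epsilon h_i'$ for a small fixed $\epsilon$, check that this perturbation satisfies the genuine two-sided bound $c_1\epsilon K_1 p^\beta \le (\textsf{E}|h_i+\epsilon h_i'|^p)^{1/p}\le (K_0+\epsilon K_2)p^\beta$ (lower bound by orthogonality of the independent centered summands at $p=2$ combined with the standard equivalence of $L_p$ and $L_2$ norms for variables with $\psi$-type moment growth, or directly from the reverse triangle inequality after conditioning), apply the original Lemma~4.7 to the perturbed family, and then let $\epsilon\to 0$ using continuity of $f$ and dominated convergence.

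Once $\bar h_i$ (or the perturbed family) has matching upper and lower central moments of order $p^\beta$, Lemma~4.7 of \cite{inventions} applies verbatim and yields $\textsf{E}f(\bar h_1,\dots,\bar h_n)\le \textsf{E}f(K'\bar h_1',\dots,K'\bar h_n')$ where $\bar h_i' := h_i' - \tilde h_i'$ is the symmetrization of $h_i'$; absorbing the factor $2$ from $\|\bar h_i'\|_p\le 2\|h_i'\|_p$ and reusing the convexity/centering argument in reverse to pass from $\bar h_i'$ back to $h_i'$ gives $\textsf{E}f(\bar h_1,\dots,\bar h_n)\le \textsf{E}f(K h_1',\dots,K h_n')$ with $K$ depending only on $K_0,K_1,K_2,\beta$. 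Combining with the first display finishes the proof.

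The main obstacle, as flagged above, is the absence of a lower moment bound on the $h_i$: the original comparison theorem is genuinely two-sided, and one must find a way to feed it variables that are bounded below as well. The perturbation $h_i\mapsto h_i+\epsilon h_i'$ is the cleanest fix, but one must be careful that the constants produced remain independent of $\epsilon$ after taking the limit — this requires checking that the constant $K$ in Lemma~4.7, applied to the perturbed family with parameters $(c_1\epsilon K_1,\,K_0+\epsilon K_2,\,\beta)$, can be taken uniform as $\epsilon\to 0$, which in turn hinges on the precise dependence of their constant on the ratio $K_2/K_1$. If that dependence is not benign, the alternative is to run the symmetrization argument directly on $\bar h_i$ and argue that if $\bar h_i$ is degenerate then $h_i\equiv 0$ and the coordinate can be dropped, reducing to a lower-dimensional instance.
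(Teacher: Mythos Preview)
Your proposal has a genuine gap, and you have in fact put your finger on it yourself without resolving it. The perturbation $h_i\mapsto h_i+\epsilon h_i'$ produces a family whose lower moment bound is of order $\epsilon K_1 p^\beta$, so when you invoke Lemma~4.7 of \cite{inventions} the resulting comparison constant depends on the ratio $(K_0+\epsilon K_2)/(\epsilon K_1)$, which blows up as $\epsilon\to 0$. That dependence is \emph{not} benign: the lower tail estimate in Lemma~2.6 comes from Paley--Zygmund and its constant genuinely deteriorates with $K_2/K_1$, so the limit does not give a finite $K$. Your fallback (``if $\bar h_i$ is degenerate drop the coordinate'') does not help either: non-degeneracy of $\bar h_i$ is far weaker than the two-sided growth $(\textsf{E}|\bar h_i|^p)^{1/p}\asymp p^\beta$ that Lemma~4.7 requires. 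For instance, if $h_i$ is a Rademacher variable then $\bar h_i$ is non-degenerate but $(\textsf{E}|\bar h_i|^p)^{1/p}\asymp 1$, not $p^\beta$, and the black-box reduction fails.

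The paper does not attempt to reduce to the two-sided Lemma~4.7 at all. Instead it re-runs that proof with the weaker hypothesis, observing that only the \emph{upper} tail of $h_i$ and the \emph{lower} tail of $h_i'$ are ever used. After the same symmetrization step you describe, Lemma~2.6 gives $\textsf{P}\{|h_i|\ge t\}\le c\,\textsf{P}\{c|h_i'|\ge t\}$ directly from the one-sided moment bound on $h_i$ and the two-sided bound on $h_i'$. The factor $c$ in front is then absorbed by a Bernoulli thinning: with $\delta_i\sim\text{Bern}(1/c)$ independent, one has $\textsf{P}\{\delta_i|h_i|\ge t\}\le \textsf{P}\{c|h_i'|\ge t\}$, which permits a coupling with $\delta_i|h_i|\le c|h_i'|$ a.s.; the contraction principle then yields $\textsf{E}f(\delta_1 h_1,\dots,\delta_n h_n)\le \textsf{E}f(ch_1',\dots,ch_n')$, and a final Jensen step removes the $\delta_i$ at the cost of another factor $c$. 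The point is that this argument never needs a lower moment bound on the $h_i$, so there is nothing to perturb and no limit to take.
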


We remark that the {\it symmetric} means that  $f(x)=f(-x)$ for every $x\in\mathbb{R}^{n}$.
Lemma 2.8 is also true for independent positive random variables. Let $\{\varepsilon_{i}, \varepsilon_{i}'\}$ be a sequence of independent Rademacher variables, and $\{h_{i}, h_{i}'\}$ are independent positive variables satisfying the moment assumptions of Lemma 2.8. For every symmetric convex function $f$, let $g(x_{1}, \cdots, x_{n}):=f(\vert x_{1}\vert,\cdots,\vert x_{n}\vert)$. Then $g$ is a symmetric convex function. Hence, we have
\begin{eqnarray}
	\textsf{E}f( h_{1},\cdots, h_{n})&=&\textsf{E}g(\varepsilon_{1}h_{1}, \cdots, \varepsilon_{n}h_{n}) \nonumber\\
	&\le& \textsf{E}g(K\varepsilon_{1}h_{1}', \cdots, K\varepsilon_{n}h_{n}')=\textsf{E}f( \vert K\vert h_{1}',\cdots, \vert K\vert h_{n}')\nonumber.
\end{eqnarray}

\begin{proof}[Proof of Lemma 2.8]
	We begin by noting that as $h_{i}$ are centered, Jensen's inequality yields
	\begin{align}
		\textsf{E}f( h_{1},\cdots, h_{n})&\le \textsf{E}f( h_{1}-\tilde{h}_{1},\cdots, h_{n}-\tilde{h}_{n})\nonumber\\
		&\le  \textsf{E}f\big( \frac{1}{2}(2h_{1})+\frac{1}{2}(-2\tilde{h}_{1}),\cdots, \frac{1}{2}(2h_{n})+\frac{1}{2}(-2\tilde{h}_{n})\big)\nonumber\\
		&\le\textsf{E}f( 2h_{1},\cdots, 2h_{n})
	\end{align}
	whenever $f$ is symmetric and convex, where $\tilde{h}_{i}$ are independent copies of $h_{i}$. Moreover, the random variables $h_{i}-\tilde{h}_{i}$ clearly satisfy the same moment assumptions as $h_{i}$ modulo a universal constant. We can therefore assume without loss of generality in the sequel that the random variables $h_{i}$ and $h_{i}^{\prime}$ are symmetrically distributed.

	To proceed,   it follows from Lemma 2.6   that
	\begin{align}
		\textsf{P}\{ \vert h_{i}\vert\ge t \}\le c\textsf{P}\{ c\vert h_{i}^{\prime}\vert\ge t \}\nonumber
	\end{align}
	for all $t\ge 0$ and $i$, where $c\ge 1$ is a constant depends only on $K_{0}, K_{1}, K_{2}$, and $\beta$. Let $\delta_{i}\sim \text{Bern}(1/c)$ be i.i.d. Bernoulli variables independent of $h$. Then
	\begin{align}
		\textsf{P}\{ \delta_{i}\vert h_{i}\vert\ge t \}\le \textsf{P}\{ c\vert h_{i}^{\prime}\vert\ge t \}\nonumber
	\end{align}
	for all $t\ge 0$. By a standard coupling argument, we can couple $(\delta_{i}, h_{i})$ and $(h_{i}^{\prime})$ on the same probability space such that $\delta_{i}\vert h_{i}\vert\le c\vert h_{i}^{\prime}\vert$ a.s. for every $i$. Interested readers can refer to Page 127 of \cite{lectures} for this coupling argument.

	Let $\varepsilon_{i}$ be i.i.d. Rademacher variables. Note that the function $$x\mapsto \textsf{E}f(x_{1}\varepsilon_{1},\cdots,  x_{n}\varepsilon_{n})$$ is convex, so its supremum over $\prod_{i\le n}[-c\vert h_{i}^{\prime}\vert, c\vert h_{i}^{\prime}\vert ]$ is attained at one of the extreme points. We also note that
	\begin{align}
		\textsf{E}f(x_{1}\varepsilon_{1}, x_{2}\varepsilon_{2}, \cdots,  x_{n}\varepsilon_{n})=\textsf{E}f(-x_{1}\varepsilon_{1}, x_{2}\varepsilon_{2}, \cdots,  x_{n}\varepsilon_{n})\nonumber
	\end{align}
	due to that $\varepsilon_{i}$ are symmetric. Hence, we have
	\begin{align}
		\textsf{E}\big(f(\varepsilon_{1}\delta_{1}\vert h_{1}\vert, \cdots, \varepsilon_{n}\delta_{n}\vert h_{n}\vert)\big| \delta, h, h^{\prime}\big)\le \textsf{E}\big(f(c\varepsilon_{1}\vert h_{1}^{\prime}\vert, \cdots, c\varepsilon_{n}\vert h_{n}^{\prime}\vert)\big| \delta, h, h^{\prime}\big)\nonumber,
	\end{align}
	which implies that
	\begin{align}
		\textsf{E}f(\delta_{1}h_{1},\cdots, \delta_{n}h_{n})\le \textsf{E}f(ch_{1}^{\prime},\cdots, ch_{n}^{\prime}).\nonumber
	\end{align}
	Using Jensen's inequality again, we have
	\begin{align}
		\textsf{E}f(\delta_{1}h_{1},\cdots, \delta_{n}h_{n})=	\textsf{E}\textsf{E}_{\delta}f(\delta_{1}h_{1},\cdots, \delta_{n}h_{n})\ge \textsf{E}f(h_{1}/c,\cdots, h_{n}/c)\nonumber.
	\end{align}
	This concludes the proof.
\end{proof}

\begin{myrem}\label{Rem_contraction}
	From the proof one can see that, if $\{h_{i}, h^{\prime}_{i} \}$ satisfies for $t\ge 0$ and $c\ge 1$
	\begin{align}
		\textsf{P}\{ \vert h_{i}\vert\ge t \}\le c\textsf{P}\{ c\vert h_{i}^{\prime}\vert\ge t \},\nonumber
	\end{align}
we have 
\begin{align}
	\textsf{E}f(h_{1},\cdots,h_{n})\le \textsf{E}f(Kh_{1}',\cdots,Kh_{n}' )\nonumber
\end{align}
for every symmetric convex function $f: \mathbb{R}^{n}\to \mathbb{R}$.

\end{myrem}

\subsection{Nets}
In this  subsection, we introduce some properties about $\varepsilon$-nets. Interested readers can refer to Section 4 in \cite{highdimension} for more information.

Let $X$ be a normed space and $U\subset X$. We call a subset $\mathcal{N}\subset U$ an $\varepsilon$-net of $U$ if satisfying
\begin{align}
	\text{dist}(u, \mathcal{N})<\varepsilon, \qquad \forall u\in U.\nonumber
\end{align}

\begin{mylem}
	Let $\varepsilon\in (0, 1)$. The unit Euclidean ball $B_{2}^{n}$ and the unit Euclidean sphere $S^{n-1}$ in $\mathbb{R}^{n}$ both have $\varepsilon$-nets of cardinality at most $2n(1+2/\varepsilon)^{n-1}$.
\end{mylem}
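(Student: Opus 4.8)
The plan is to prove both statements by the classical volumetric (maximal $\varepsilon$-separated set) argument, with one small refinement that trades a power of $(1+2/\varepsilon)$ for the polynomial factor $2n$. Recall first the standard fact that a maximal $\varepsilon$-separated subset $\mathcal{N}$ of a set $U$ --- maximal among subsets of $U$ whose distinct elements lie at Euclidean distance at least $\varepsilon$ --- is automatically an $\varepsilon$-net of $U$: if some $u\in U$ had $\mathrm{dist}(u,\mathcal{N})\ge\varepsilon$ it could be added to $\mathcal{N}$, contradicting maximality. Hence it suffices to bound $\#\mathcal{N}$, which I would do by a packing estimate. For the sphere, let $\mathcal{N}\subset S^{n-1}$ be maximal $\varepsilon$-separated. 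The open Euclidean balls of radius $\varepsilon/2$ centered at the points of $\mathcal{N}$ are pairwise disjoint, and since every center has norm $1$ and $\varepsilon<1$ each such ball is contained in the shell $\{z: 1-\varepsilon/2\le\|z\|_{2}\le 1+\varepsilon/2\}$. Writing $\omega_{n}=\mathrm{vol}(B_{2}^{n})$ and comparing Lebesgue measures gives
\begin{align}
	\#\mathcal{N}\cdot\Big(\frac{\varepsilon}{2}\Big)^{n}\omega_{n}\;\le\;\Big[\Big(1+\frac{\varepsilon}{2}\Big)^{n}-\Big(1-\frac{\varepsilon}{2}\Big)^{n}\Big]\omega_{n},\nonumber
\end{align}
so that $\#\mathcal{N}\le(a+1)^{n}-(a-1)^{n}$ with $a:=2/\varepsilon>2$. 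I would then invoke the elementary inequality $(a+1)^{n}-(a-1)^{n}\le 2n(a+1)^{n-1}$, which follows by applying the mean value theorem to $t\mapsto t^{n}$ on the length-$2$ interval $[a-1,a+1]$; this yields $\#\mathcal{N}\le 2n(1+2/\varepsilon)^{n-1}$, as desired.

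For the unit ball the same packing argument applies word for word, the only difference being that the disjoint radius-$\varepsilon/2$ balls centered at a maximal $\varepsilon$-separated $\mathcal{N}\subset B_{2}^{n}$ are now contained in $(1+\varepsilon/2)B_{2}^{n}$; comparing volumes gives $\#\mathcal{N}\le(1+2/\varepsilon)^{n}$. This is at most $2n(1+2/\varepsilon)^{n-1}$ as soon as $1+2/\varepsilon\le 2n$, which covers the range of $\varepsilon$ relevant to the $\varepsilon$-net arguments of Section~3; and in any case $(1+2/\varepsilon)^{n}$ is already of the optimal order for the $n$-dimensional ball, so it is all that is ever needed below.

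The only step that is not completely routine is the gain of one unit in the exponent for the sphere. This is precisely where it is used that the small balls sit inside a thin \emph{shell} rather than inside a full ball, combined with the estimate $(a+1)^{n}-(a-1)^{n}\le 2n(a+1)^{n-1}$; everything else --- that a maximal separated set is a net, and the volume packing itself --- is entirely standard, and nothing beyond elementary volume computations is needed.
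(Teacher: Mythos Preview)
The paper does not prove this lemma; it is stated in Section~2.5 with a reference to Vershynin's book and never argued. Your volumetric argument is the standard one and is fully correct for the sphere: packing $\varepsilon/2$-balls into the shell $\{1-\varepsilon/2<\|z\|_2<1+\varepsilon/2\}$ and applying the mean value theorem to $t\mapsto t^n$ gives exactly the stated bound $2n(1+2/\varepsilon)^{n-1}$.

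Your hesitation about the ball is well placed and is not a defect in your argument. For $B_2^n$ the volume lower bound forces any $\varepsilon$-net to have at least $\varepsilon^{-n}$ points, whereas $2n(1+2/\varepsilon)^{n-1}$ grows only like $\varepsilon^{-(n-1)}$; so the lemma as stated cannot hold for the ball once $\varepsilon$ is small (roughly $\varepsilon<1/(n2^n)$). The correct bound for the ball is the $(1+2/\varepsilon)^n$ you obtain, and that is what the paper actually uses later (the $(1/2)$-net of $S^{n-1}$ with $|\mathcal{N}|\le 5^n$ in the proof of Theorem~3.1). In short: your sphere proof is complete, and for the ball you have the right bound; the discrepancy lies in the lemma's formulation, not in your reasoning.
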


The follwoing lemma shows that $\varepsilon$-nets play an important role in computing operator norms.
\begin{mylem}
	Let $A: X\to Y$ be a linear operator between normed spaces $X$ and $Y$, and let $\mathcal{N}$ be an $\varepsilon$-net of either the unit sphere $S(X)$ or the unit ball $B(X)$ of $X$ for some $\varepsilon\in (0, 1)$. Then
	\begin{align}
		\Vert A\Vert\le \frac{1}{1-\varepsilon}\sup_{x\in \mathcal{N}}\Vert Ax\Vert_{Y}.\nonumber
	\end{align}
\end{mylem}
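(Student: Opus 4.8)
The plan is to run the standard approximation (``net-to-norm'') argument. First I would fix an arbitrary $x$ in the unit sphere $S(X)$; the case where $\mathcal{N}$ is a net of the ball $B(X)$ is handled by the identical argument with $S(X)$ replaced by $B(X)$ throughout, using that $\Vert A\Vert=\sup_{x\in S(X)}\Vert Ax\Vert_{Y}=\sup_{x\in B(X)}\Vert Ax\Vert_{Y}$ for a linear operator. Since $\mathcal{N}$ is an $\varepsilon$-net, I can pick $y\in\mathcal{N}$ with $\Vert x-y\Vert_{X}<\varepsilon$; note that then $\Vert x-y\Vert_{X}<\varepsilon<1$, so $x-y$ lies in $B(X)$.

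Next I would estimate, using linearity of $A$, the triangle inequality in $Y$, and the definition of the operator norm,
\begin{align}
	\Vert Ax\Vert_{Y}\le \Vert Ay\Vert_{Y}+\Vert A(x-y)\Vert_{Y}\le \sup_{z\in\mathcal{N}}\Vert Az\Vert_{Y}+\Vert A\Vert\,\Vert x-y\Vert_{X}< \sup_{z\in\mathcal{N}}\Vert Az\Vert_{Y}+\varepsilon\Vert A\Vert.\nonumber
\end{align}
Taking the supremum over $x\in S(X)$ on the left-hand side, and noting that the first term on the right does not depend on $x$, gives $\Vert A\Vert\le \sup_{z\in\mathcal{N}}\Vert Az\Vert_{Y}+\varepsilon\Vert A\Vert$. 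Rearranging yields $(1-\varepsilon)\Vert A\Vert\le \sup_{z\in\mathcal{N}}\Vert Az\Vert_{Y}$, which is the claim since $1-\varepsilon>0$.

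The argument is entirely elementary; the only steps that deserve a word of care are the uniform treatment of the ``net of the sphere'' and ``net of the ball'' cases, and the final rearrangement, which needs $\Vert A\Vert<\infty$. In the applications of this lemma $X$ is finite-dimensional, so $\Vert A\Vert<\infty$ is automatic; in general one observes that if $\Vert A\Vert=\infty$ then the displayed chain of inequalities already forces $\sup_{z\in\mathcal{N}}\Vert Az\Vert_{Y}=\infty$ as well, so the asserted bound holds trivially. I do not anticipate any genuine obstacle.
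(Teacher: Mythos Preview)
Your argument is correct and is exactly the standard ``net-to-norm'' proof; the paper itself does not prove this lemma but simply states it and refers the reader to Section~4 of \cite{highdimension}, where the same argument appears. One small quibble: your handling of the degenerate case $\Vert A\Vert=\infty$ is not quite right---the displayed chain only gives $\Vert Ax\Vert_{Y}<\infty+\varepsilon\cdot\infty$, which is vacuous and does not force $\sup_{z\in\mathcal{N}}\Vert Az\Vert_{Y}=\infty$---but as you note, in every use of the lemma in this paper $X$ is finite-dimensional, so $\Vert A\Vert<\infty$ automatically and the rearrangement is legitimate.
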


\section{Proof of the main result}
\subsection{Proof of Theorem \ref{Theo_main_weak}}
\subsubsection{Tail bounds for matrices with small columns}
In this subsection, we shall show a tail bound with exponential decay but under the additional assumption that the norms of  columns of the matrix $B=(B_{1}, \cdots, B_{N})$ are  small.

\begin{mytheo}
	Let $N, n$ be positive integers. Consider an $N\times  n$ random matrix $A$ whose entries are subexponentional random variables with mean zero and $\max_{i,j}\Vert a_{ij}\Vert_{\psi_{1}}\le 1$. Let $B$ be an $n\times N $ matrix such that $\Vert B\Vert\le 1$, and whose columns satisfy for every $i$
	\begin{align}
		\Vert B_{i}\Vert_{2}\lesssim n^{-1}\log^{-\frac{5}{2}}n.\nonumber
	\end{align}
Then we have for $p\ge 1$
\begin{align}
	(\textsf{E} \Vert BA\Vert^{p})^{1/p}\le C\sqrt{n}p.
\end{align}
\end{mytheo}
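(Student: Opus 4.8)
The plan is to follow the Gaussian‑comparison strategy of the Introduction. Since the entries may be assumed symmetric with $\max_{i,j}\Vert a_{ij}\Vert_{\psi_{1}}\le 1$, the first step is to pass from the subexponential matrix $A$ to a Gaussian model. Note that $X\mapsto\Vert BX\Vert^{p}$, viewed as a function of the $Nn$ entries of $X$, is symmetric (because $\Vert -BX\Vert=\Vert BX\Vert$) and convex (it is $t\mapsto t^{p}$, convex and nondecreasing on $[0,\infty)$, composed with the norm and with the linear map $X\mapsto BX$). By Lemma~\ref{Lem_2.3} each $a_{ij}$ satisfies $(\textsf{E}\vert a_{ij}\vert^{p})^{1/p}\le Cp$, while for independent standard Gaussians $g_{ij},g_{ij}'$ one has $(\textsf{E}\vert g_{ij}g_{ij}'\vert^{p})^{1/p}=(\textsf{E}\vert g_{ij}\vert^{p})^{2/p}\asymp p$, with matching two‑sided bounds for $p\ge 2$. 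Hence Lemma~\ref{ct} applies with $\beta=1$ and yields
\begin{align}
\textsf{E}\Vert BA\Vert^{p}\le K^{p}\,\textsf{E}\big\Vert B(g_{ij}g_{ij}')_{N\times n}\big\Vert^{p}.\nonumber
\end{align}

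Next I would condition on $\{g_{ij}'\}$. Then $G:=B(g_{ij}g_{ij}')_{N\times n}$ is a centered Gaussian $n\times n$ matrix, so an $\varepsilon$‑net argument over the two unit spheres (the net lemmas of Section~2.5 together with the Gaussian concentration inequality of Section~2.2) applies. For unit vectors $x,y$ the bilinear form $\langle y,Gx\rangle=\sum_{i,l}\langle B_{i},y\rangle\,x_{l}\,g_{il}'g_{il}$ is conditionally $N(0,\sigma_{x,y}^{2})$ with $\sigma_{x,y}^{2}=\sum_{i,l}\langle B_{i},y\rangle^{2}x_{l}^{2}g_{il}^{\prime 2}$, where $B_{i}$ is the $i$th column of $B$; a direct optimisation gives $\sup_{x,y}\sigma_{x,y}^{2}=\Lambda:=\max_{l\le n}\big\Vert\sum_{i\le N}g_{il}^{\prime 2}B_{i}B_{i}^{\top}\big\Vert$. (It is essential to keep the operator norm here and not to bound it by its trace $\sum_{i}g_{il}^{\prime2}\Vert B_{i}\Vert_{2}^{2}$, which is larger by a factor of order $n$ and would lose a $\sqrt n$.) Discretising each sphere by a net of size $\le 9^{n}$ and taking a union bound gives, conditionally, $\textsf{P}_{g}\{\Vert G\Vert\ge C\sqrt{\Lambda}(\sqrt n+\sqrt u)\}\le e^{-u}$ for $u\ge 0$; integrating the tail gives $(\textsf{E}_{g}\Vert G\Vert^{p})^{1/p}\le C\sqrt{\Lambda}(\sqrt n+\sqrt p)$, and taking expectations in $g'$ gives
\begin{align}
(\textsf{E}\Vert BA\Vert^{p})^{1/p}\le CK(\sqrt n+\sqrt p)\big(\textsf{E}\Lambda^{p/2}\big)^{1/p}.\nonumber
\end{align}

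The remaining task — and the place where the small‑column hypothesis $\Vert B_{i}\Vert_{2}\lesssim n^{-1}\log^{-5/2}n$ is used — is to show $(\textsf{E}\Lambda^{p/2})^{1/p}\lesssim 1+n^{-1}\sqrt p$. Write $\Lambda_{l}:=\Vert\sum_{i}g_{il}^{\prime2}B_{i}B_{i}^{\top}\Vert\le\Vert BB^{\top}\Vert+R_{l}$ with $R_{l}:=\Vert\sum_{i}(g_{il}^{\prime2}-1)B_{i}B_{i}^{\top}\Vert$. The summands $(g_{il}^{\prime2}-1)B_{i}B_{i}^{\top}$ are independent, centered, of operator norm $\lesssim\vert g_{il}^{\prime2}-1\vert\,\rho^{2}$ with $\rho:=\max_{i}\Vert B_{i}\Vert_{2}$, and of matrix variance $\lesssim\rho^{2}\Vert BB^{\top}\Vert\le\rho^{2}$. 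A matrix Bernstein inequality — or, staying within the tools of Section~2, an $\varepsilon$‑net over $S^{n-1}$ combined with Bernstein's inequality (Lemma~\ref{Lem_Berstein}) applied to the chi‑square sums $\sum_{i}(g_{il}^{\prime2}-1)\langle B_{i},v\rangle^{2}$, for which $\sum_{i}\langle B_{i},v\rangle^{4}\le\rho^{2}$ — shows that $R_{l}$ has mean of order $\rho\sqrt{\log n}$ and subexponential fluctuations on scale $\rho^{2}$, so $R_{l}\lesssim\rho\sqrt{\log n}+\rho\sqrt s+\rho^{2}s$ with probability $1-e^{-s}$. Since $\rho\lesssim n^{-1}\log^{-5/2}n$, a union bound over the $n$ columns gives $\Lambda\le\Vert BB^{\top}\Vert+\max_{l}R_{l}\le 2$ on an event of probability $1-e^{-s}$ for all $s$ up to a fixed power of $n$; truncating $\Lambda$ at this level — and bounding the rare overshoot crudely by $\Lambda\le\max_{l}\sum_{i}g_{il}^{\prime2}\Vert B_{i}\Vert_{2}^{2}$, whose high moments are negligible once multiplied by the exponentially small exceptional probability — yields $(\textsf{E}\Lambda^{p/2})^{1/p}\lesssim 1+n^{-1}\sqrt p$. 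Substituting back,
\begin{align}
(\textsf{E}\Vert BA\Vert^{p})^{1/p}\lesssim(\sqrt n+\sqrt p)(1+n^{-1}\sqrt p)\lesssim\sqrt n+\sqrt p+n^{-1}p\lesssim\sqrt n\,p,\nonumber
\end{align}
which is the claim.

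I expect the control of $\textsf{E}\Lambda^{p/2}$ to be the delicate step: one must tame the heavy ($\chi^{2}$‑type) tails coming from the $g_{il}^{\prime2}$ while simultaneously passing to the maximum over the $n$ columns of $A$, and it is precisely the polylogarithmic slack $\log^{-5/2}n$ in the small‑column threshold that must absorb both the $\sqrt{\log n}$ from the matrix‑variance term and the union bound over those $n$ columns. With less slack one only gets $\Lambda\lesssim n$, and the estimate degrades by a factor $\sqrt n$ — exactly the loss that the truncation is designed to avoid.
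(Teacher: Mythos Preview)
Your approach is correct and genuinely different from the paper's.

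The paper does \emph{not} run the comparison theorem on the full matrix $A$. Instead it first truncates each entry at level $a=C\sqrt{n}\log n$, writing $A=\overline{A}+\tilde{A}$. The bounded part $B\overline{A}$ is handled by a black-box result of Vershynin (Lemma~3.1 in the paper), and it is precisely here that the threshold $\Vert B_{i}\Vert_{2}\lesssim n^{-1}\log^{-5/2}n$ is calibrated, so that $ab^{1/2}\log^{1/4}(2n)\lesssim 1$. Only the residual $B\tilde{A}$ is passed through the comparison Lemma~\ref{ct}, and the Gaussian side is taken with the truncated weights $\tilde g_{ij}=g'_{ij}\mathbb{I}_{|g'_{ij}|>\sqrt{a}}$. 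After the one-sided net (over the \emph{left} sphere only, with the Lipschitz Gaussian-concentration lemma rather than a bilinear net) the controlling quantity is the \emph{trace-type} variable
\[
\Xi^{2}=\sum_{i\le N}\big(\max_{j\le n}\tilde g_{ij}^{2}\big)\Vert B_{i}\Vert_{2}^{2},
\]
not an operator norm; the truncation makes $\textsf{E}\Xi^{2}=o(1)$, which is exactly what kills the extra $\sqrt{n}$ that the untruncated $\Xi_{1}$ of the Introduction would have produced.

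Your route bypasses both the truncation and Vershynin's lemma by keeping the sharp operator norm $\Lambda=\max_{l}\big\Vert\sum_{i}g_{il}^{\prime 2}B_{i}B_{i}^{\top}\big\Vert$ after the bilinear net, and then controlling $\Lambda$ by a Bernstein-type argument for the centred chi-square weights. This is cleaner and arguably needs less of the logarithmic slack in the small-column hypothesis. One small inaccuracy: if you carry out the ``net over $S^{n-1}$ + scalar Bernstein'' alternative you mention, the union bound over $9^{n}$ directions makes the effective mean of $R_{l}$ of order $\rho\sqrt{n}$, not $\rho\sqrt{\log n}$; the latter really requires the matrix Bernstein inequality (in a subexponential form, since $(g^{\prime2}-1)B_{i}B_{i}^{\top}$ is unbounded), which is not among the lemmas the paper has set up. Either bound is harmless here because $\rho\sqrt{n}\lesssim n^{-1/2}\log^{-5/2}n\ll 1$, so your conclusion $(\textsf{E}\Lambda^{p/2})^{1/p}\lesssim 1+n^{-1}\sqrt{p}$ and the final estimate stand.
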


 To prove Theorem 3.1, we first introduce the following lemma, which is Proposition 3.7 in \cite{ptrfvershynin}

\begin{mylem}
	Let $a, b\ge 0$ and $N, n$ be positive integers. Let $A$ be an $N\times n$ matrix whose entries are random independent variables $a_{ij}$ with mean zero and such that
	\begin{align}
		\textsf{E}a_{ij}^{2}\le 1,\quad \vert a_{ij}\vert\le a.\nonumber
	\end{align}
Let $B$ be an $n\times N$ matrix such that $\Vert B\Vert\le 1$, and whose columns satisfy for every $i\ge 1$
\begin{align}
	\Vert B_{i}\Vert_{2}\le b.\nonumber
\end{align}
Then
\begin{align}
	\textsf{P}\big\{ \Vert BA\Vert>C\big(1+ab^{1/2}\log^{1/4}(2n)\big)\sqrt{n}+at\big\}\le 4\exp(-t^{2}/4)\nonumber.
\end{align}
\end{mylem}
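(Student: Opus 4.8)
The plan is to separate the claim into a concentration statement and a mean estimate. For the concentration: view $\|BA\|$ as a function of the $Nn$ entries of $A$. It is convex (a norm composed with a linear map), and since $\|B\|\le1$ it is $1$-Lipschitz in the Euclidean (Frobenius) metric on the entries; as those entries are bounded by $a$, Talagrand's concentration inequality (Lemma~2.2) gives
\[
\textsf{P}\{\,\lvert\,\|BA\|-\textsf{E}\|BA\|\,\rvert>at\,\}\le4\exp(-t^{2}/4).
\]
Hence it suffices to prove the mean bound $\textsf{E}\|BA\|\le C\bigl(1+ab^{1/2}\log^{1/4}(2n)\bigr)\sqrt n$: granting it, $\textsf{P}\{\|BA\|>C(\cdots)\sqrt n+at\}\le\textsf{P}\{\|BA\|-\textsf{E}\|BA\|>at\}\le4\exp(-t^{2}/4)$, which is the assertion.

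For the mean I would work with $\|BA\|^{2}=\|BAA^{\top}B^{\top}\|$ and split $AA^{\top}=\textsf{E}AA^{\top}+(AA^{\top}-\textsf{E}AA^{\top})$. Since the rows of $A$ are independent with mean zero, $\textsf{E}AA^{\top}=\operatorname{diag}(\sigma_{1}^{2},\dots,\sigma_{N}^{2})$ with $\sigma_{i}^{2}=\sum_{k\le n}\textsf{E}a_{ik}^{2}\le n$, so $\|B(\textsf{E}AA^{\top})B^{\top}\|\le(\max_{i}\sigma_{i}^{2})\|B\|^{2}\le n$; this produces the main term $\sqrt n$. For the fluctuation part, given $v\in S^{n-1}$ put $w=B^{\top}v$, so that $\|w\|_{2}\le\|B\|\le1$ and $\|w\|_{\infty}\le\max_{i}\|B_{i}\|_{2}\le b$; then
\[
v^{\top}B\bigl(AA^{\top}-\textsf{E}AA^{\top}\bigr)B^{\top}v=\sum_{k\le n}\Bigl(\langle w,a_{\cdot k}\rangle^{2}-\textsf{E}\langle w,a_{\cdot k}\rangle^{2}\Bigr),
\]
a sum of $n$ independent centered variables. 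Each $\langle w,a_{\cdot k}\rangle$ is, by a Hoeffding-type bound, sub-gaussian with $\psi_{2}$-norm $\lesssim a\|w\|_{2}\le a$, and is bounded thanks to $\|w\|_{\infty}\le b$; a Bernstein estimate for a fixed $v$ (Lemma~\ref{Lem_Berstein}), a preliminary truncation of the columns on which $\langle w,a_{\cdot k}\rangle^{2}$ is atypically large, and a union bound over an $\varepsilon$-net of $S^{n-1}$ (cardinality $e^{O(n)}$; Section~2.5) together give a bound of order $\|B(AA^{\top}-\textsf{E}AA^{\top})B^{\top}\|\lesssim nX(1+X)$ with $X:=ab^{1/2}\log^{1/4}(2n)$. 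Hence $\textsf{E}\|BA\|^{2}\lesssim n(1+X)^{2}$ and, by Jensen, $\textsf{E}\|BA\|\lesssim(1+X)\sqrt n$, as required.

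The hard part is extracting the sharp correction $X=ab^{1/2}\log^{1/4}(2n)$ rather than a much cruder one (a naive Bernstein-plus-net estimate readily loses a factor $a$ or worse): one must split the quadratic-form fluctuation into a ``diagonal'' piece $\sum_{i}\|B_{i}\|_{2}^{2}(a_{ik}^{2}-\textsf{E}a_{ik}^{2})$ — whose deviation is governed by $\max_{i}\|B_{i}\|_{2}^{2}\le b^{2}$ — and an ``off-diagonal'' piece whose deviation is governed by $\|B^{\top}B\|_{F}^{2}\le\|B\|^{2}\operatorname{tr}(B^{\top}B)\le n$, and to truncate carefully the rare events on which some column $\|BA_{\cdot k}\|_{2}$ greatly exceeds $\sqrt n$. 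This is precisely the content of Proposition~3.7 of~\cite{ptrfvershynin}, on which one may alternatively rely directly.
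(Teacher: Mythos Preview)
The paper does not prove this lemma at all: it is quoted verbatim as Proposition~3.7 of \cite{ptrfvershynin} and simply invoked. Your proposal is therefore not competing with a paper proof but rather sketching the original Vershynin argument.

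Your concentration step is exactly right and is indeed how Vershynin proceeds: $A\mapsto\|BA\|$ is convex (norm of a linear image) and $1$-Lipschitz in the Frobenius metric because $\bigl|\|BA\|-\|BA'\|\bigr|\le\|B(A-A')\|\le\|B\|\,\|A-A'\|\le\|A-A'\|_{F}$, and Talagrand's inequality (Lemma~2.2) with bound $a$ on each entry delivers the tail $4\exp(-t^{2}/4)$ with deviation scale $at$. This reduces the lemma to the mean estimate $\textsf{E}\|BA\|\lesssim(1+ab^{1/2}\log^{1/4}(2n))\sqrt{n}$, as you say.

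Your outline of the mean bound is plausible but, as you candidly admit, does not actually produce the factor $ab^{1/2}\log^{1/4}(2n)$: the Bernstein-plus-net argument you sketch, applied to the centered quadratic form $\sum_{k}(\langle w,a_{\cdot k}\rangle^{2}-\textsf{E}\langle w,a_{\cdot k}\rangle^{2})$, naturally yields a correction of order $a^{2}b$ (from $\|\langle w,a_{\cdot k}\rangle^{2}\|_{\psi_{1}}\lesssim a^{2}\|w\|_{2}^{2}$ and $\|w\|_{\infty}\le b$) rather than $a^{2}b\log^{1/2}(2n)$, and the diagonal/off-diagonal split you mention does not by itself repair this. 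In Vershynin's actual proof the sharp exponent comes from a further decomposition according to which coordinates of $w=B^{\top}v$ are large, combined with a careful symmetrization and comparison; this is genuinely delicate, which is presumably why the present paper simply cites the result. Your final sentence---that one may ``alternatively rely directly'' on Proposition~3.7 of \cite{ptrfvershynin}---is therefore the honest resolution, and matches what the paper does.
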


\begin{proof}[Proof of Theorem 3.1]
Without loss of generality, one can assume all entries $a_{ij}$ are symmetric. Let $a=C\sqrt{n}\log n$. Here, $C$ is a suitably large universal constant. Truncate every entry of the matrix $A$ according to $a$ and set
	\begin{align}
		\overline{a}_{ij}:=a_{ij}\mathbb{I}_{(\vert a_{ij}\vert\le a)},\quad \tilde{a}_{ij}:=a_{ij}\mathbb{I}_{(\vert a_{ij}\vert> a)}.\nonumber
	\end{align}
Then all random variables $\overline{a}_{ij}$ and $\tilde{a}_{ij}$ have mean zero. Decompose $BA$ in the following way
\begin{align}
	BA=B\overline{A}+B\tilde{A},\nonumber
\end{align}
where $\overline{A}=(\overline{a}_{ij})$ and $\tilde{A}=(\tilde{a}_{ij})$.

Note that the condition $\max_{i,j}\Vert a_{ij}\Vert_{\psi_{1}}\le 1$ and the subexponential properties (Lemma 2.3) imply $\overline{a}_{ij}$ satisfy the assumptions of Lemma 3.1 with $a=C\sqrt{n}\log n, b=C_{1}n^{-1}\log^{-5/2}n$. Hence, we have for $t\ge 0$
\begin{align}
	\textsf{P}\big\{ \Vert B\overline{A}\Vert>C_{2}\sqrt{n}+C\sqrt{n}t\big\}\le 4\exp(-t^{2}/4).\nonumber
\end{align}
Integration yields for $p\ge 1$
\begin{align}\label{bound}
	(\textsf{E}\Vert B\overline{A}\Vert^{p})^{1/p}\lesssim \sqrt{np}.
\end{align}

Next, we concentrate on the tail bound for $\Vert B\tilde{A}\Vert$. Let $\{g_{ij}, g^{\prime}_{ij}\}$ be a sequence of independent standard normal random variables, and let $$A^{\prime}=(a_{ij}^{\prime}):=(g_{ij}g_{ij}^{\prime}\mathbb{I}_{(\vert g_{ij}^{\prime}\vert>\sqrt{a})}).$$

On the one hand, the function $f: \mathbb{R}^{Nn}\to \mathbb{R}$ defined by $f(A)=\Vert BA\Vert$ is convex and $f(A)=f(-A)$ due to the property of the spectral norm.

On the other hand, we have by conditional probability
\begin{align}
	\big(\textsf{E}\vert g_{ij}g_{ij}^{\prime}\vert^{p}\big)^{1/p}=\big(\textsf{E}(\textsf{E}_{(g_{ij})}\vert g_{ij}g_{ij}^{\prime}\vert^{p})\big)^{1/p}\asymp p,\nonumber
\end{align}
 which means by Lemma 2.6
\begin{align}
\textsf{P}\{\vert a_{ij}\vert >t  \}\le c\textsf{P}\{c\vert g_{ij}g^{\prime}_{ij}\vert >t  \},\nonumber
\end{align}
where $t\ge 0$ and $c\ge 1$.
Recalling the definition of $\tilde{a}_{ij}$ and $a^{\prime}_{ij}$, we have for $t\ge 0$
\begin{align}
	\textsf{P}\{\vert \tilde{a}_{ij}\vert>t  \}\le c\textsf{P}\{c\vert a^{\prime}_{ij}\vert>t  \}, \nonumber
\end{align}
due to $\mathbb{I}_{(\vert g_{ij}g_{ij}^{\prime}\vert >a)}\le \mathbb{I}_{(\vert g_{ij}\vert >\sqrt{a})}+\mathbb{I}_{(g_{ij}^{\prime}\vert >\sqrt{a})} $.
Then, Remark \ref{Rem_contraction} yields that
\begin{align}
	(\textsf{E}\Vert B\tilde{A}\Vert^{p})^{1/p}\lesssim (\textsf{E}\Vert BA^{\prime}\Vert^{p})^{1/p}, \quad p\ge 1.\nonumber
\end{align}
Let $W_{1}^{\prime}, \cdots, W_{n}^{\prime}\in \mathbb{R}^{n}$ denote the columns of the matrix $BA^{\prime}$, then for $j=1,\cdots,n$
\begin{align}
	W_{j}^{\prime}=\sum_{i\le N}g_{ij}\tilde{g}_{ij}B_{i},\nonumber
\end{align}
where $\tilde{g}_{ij}=g_{ij}^{\prime}\mathbb{I}_{(\vert g_{ij}^{\prime}\vert>\sqrt{a})}$.
Consider a $(1/2)$-net $\mathcal{N}$ of the unit Euclidean sphere $\mathbb{S}^{n-1}$ of cardinality $\vert \mathcal{N}\vert\le 5^{n}$, then we have by Lemma 2.10
\begin{align}
	\Vert BA^{\prime}\Vert^{2}=\Vert (BA^{\prime})^{*}\Vert^{2}\le 4\max_{x\in \mathcal{N}}\Vert(BA^{\prime})^{*}x\Vert_{2}^{2}=4\max_{x\in \mathcal{N}}\sum_{j\le n}\langle W_{j}^{\prime}, x\rangle^{2}. \nonumber
\end{align}

For every $j\le n$, conditioning on $\{ \tilde{g}_{ij}\}$, the random variable
\begin{align}
	\langle W_{j}^{\prime}, x\rangle=\sum_{i\le N}g_{ij}\left\langle \tilde{g}_{ij}B_{i}, x\right\rangle\nonumber
\end{align}
is a Gaussian random variable with mean zero and variance given by
\begin{align}
	\sum_{i\le N}\left\langle \tilde{g}_{ij}B_{i}, x\right\rangle^{2}\le \sum_{i\le N}(\max_{j\le n}\tilde{g}_{ij}^{2})\Vert B_{i}\Vert_{2}^{2}=:\Xi^{2}.\nonumber
\end{align}

To obtain the concentration inequality for $\sum_{j\le n}\langle W_{j}^{\prime}, x\rangle^{2}\big)^{1/2}$, we employ  Lemma 2.1 for  the function $g(y)=(\sum_{j\le n}d_{j}^{2}y_{j}^{2})^{1/2}$. Note that $g$ is a Lipschitz function on $\mathbb{R}^{n}$ with $\Vert g\Vert_{\text{Lip}}=\Vert d\Vert_{\infty}$.
Therefore, by Lemma 2.1 with $d_{i}\le \Xi$, we have for every $t\ge 0$
\begin{align}
	\textsf{P}_{g}\Big\{\big(\sum_{j\le n}\langle W_{j}^{\prime}, x\rangle^{2}\big)^{1/2}>\Xi\sqrt{n}+t\Big\}\le e^{-c_{0}t^{2}/\Xi},\nonumber
\end{align}
where $\textsf{P}_{g}$ is with respect to $(g_{ij})$ (i.e. conditioned on $(\tilde{g}_{ij})$). If we let $t=s\Xi\sqrt{n}$, then we have
\begin{align}
	\textsf{P}_{g}\Big\{\big(\sum_{j\le n}\langle W_{j}^{\prime}, x\rangle^{2}\big)^{1/2}>\Xi\sqrt{n}(1+s)\Big\}\le e^{-c_{0}s^{2}n},\nonumber
\end{align}
Taking the union bound over $x\in \mathcal{N}$, we obtain
\begin{align}
	\textsf{P}_{g}\{  \Vert BA^{\prime}\Vert >2(1+s)\Xi\sqrt{n}\}\le 5^{n}e^{-c_{0}s^{2}n}\le e^{(2-c_{0}s^{2})n}.\nonumber
\end{align}
By adjusting the constant, we have for $u\ge 1$
\begin{align}
	\textsf{P}_{g}\{  \Vert BA^{\prime}\Vert >Cu\Xi\sqrt{n}\}\le e^{-cu^{2}n}.\nonumber
\end{align}
Integration yields
\begin{align}\label{conditional}
	\textsf{E}_{g}\Vert BA^{\prime}\Vert^{p}\le C^{p}\Xi^{p}(\sqrt{n}+\sqrt{p})^{p},
\end{align}
where $p\ge 1$ and $C$ is a universal constant.

Then we turn to the random variable $\Xi$. For $t\ge 0$ and $\lambda>0$, we have by the Cram\'{e}r-Chernoff bounding method
\begin{align}\label{3.2}
	\textsf{P}\big\{ \Xi^{2}-\textsf{E}\Xi^{2}>t \big\}\le e^{-\lambda t}\prod_{i=1}^{N}\textsf{E}\exp\Big(\lambda \big(\max_{j\le n}\tilde{g}_{ij}^{2}-\textsf{E} \max_{j\le n}\tilde{g}_{ij}^{2}\big)\Vert B_{i}\Vert_{2}^{2}\Big).
\end{align}
Let $\eta_{1}, \cdots,\eta_{n}$ be a sequence of exponential random variables with parameter $1$. Note that by Remark 2.1,
\begin{align}
	 \Big(\textsf{E}\big\vert \max_{j\le n}\eta_{j}-\textsf{E}\max_{j\le n}\eta_{j}  \big\vert^{p}\Big)^{1/p}\asymp p .\nonumber
\end{align}
By Markov's inequality, we have for $p\ge 1$
\begin{align}
	\textsf{E}\vert\tilde{g}_{ij}\vert^{p}\le a^{-p/2}\textsf{E}\vert g^{\prime}_{ij}\vert^{p}\lesssim (\frac{p}{a})^{p/2}.\nonumber
\end{align}
Hence, we have
\begin{align}
a \Big(\textsf{E}\big\vert \max_{j\le n}\tilde{g}_{ij}^{2}-\textsf{E}\max_{j\le n}\tilde{g}_{ij}^{2}  \big\vert^{p}\Big)^{1/p}\lesssim 	a\Big(\textsf{E} (\max_{j\le n}\tilde{g}_{ij}^{2})^{p}\Big)^{1/p}\lesssim \Big(\textsf{E} (\max_{j\le n}\eta_{j})^{p}\Big)^{1/p}\lesssim p\log n.\nonumber
\end{align}
We have by Lemma 2.5
\begin{align}
	\Big(\textsf{E} (\max_{j\le n}\eta_{j})^{p}\Big)^{1/p}=\Big(\textsf{E} (\sum_{l=1}^{n}\frac{T_{l}}{2l})^{p}\Big)^{1/p}\lesssim p\log n,\nonumber
\end{align}
where $\{T_{l}\}$ are independent chi-square distributed random variables whose degrees of freedom are  $2$.  Hence, we have
\begin{align}
	\Big(\textsf{E}\big\vert \max_{j\le n}\tilde{g}_{ij}^{2}-\textsf{E}\max_{j\le n}\tilde{g}_{ij}^{2}  \big\vert^{p}\Big)^{1/p}\lesssim \frac{p}{\sqrt{n}}.\nonumber
\end{align}

 For $\lambda\le \sqrt{n}/(C_{1}\max_{i}\Vert B_{i}\Vert_{2}^{2})$, the property (v) of Lemma 2.3 implies (\ref{3.2}) can be further  bounded by

\begin{align}\label{3.4}
	&e^{-\lambda t}\prod_{i=1}^{N}\textsf{E}\exp\big(C_{0}\lambda \Vert B_{i}\Vert_{2}^{2}(\max_{j\le n}\tilde{g}_{ij}^{2}-\textsf{E}\max_{j\le n}\tilde{g}_{ij}^{2} )\big)\nonumber\\
	\le& \exp\big(-\lambda t+\frac{C_{1}^2\lambda^2}{n}\sum_{i\le N}\Vert B_{i}\Vert_{2}^{4}\big).
\end{align}
To optimize the bound of (\ref{3.4}), let
\begin{align}
	\lambda=\min\Big(\frac{nt}{2C_{1}^{2}\sum_{i\le N}\Vert B_{i}\Vert_{2}^{4}},  \frac{\sqrt{n}}{C_{1}\max_{i}\Vert B_{i}\Vert_{2}^{2}}\Big).\nonumber
\end{align}
Then (\ref{3.4}) is further bounded by
\begin{align}
	\exp\Big(-C_{3}\min(\frac{\sqrt{n}t}{\max_{i\le N}\Vert B_{i}\Vert_{2}^{2}}, \frac{nt^{2}}{\sum_{i\le N}\Vert B_{i}\Vert_{2}^{4}})\Big).\nonumber
\end{align}
Recalling \eqref{3.2}, we have for $t\ge 0$
\begin{align}
	\textsf{P}\big\{ \Xi^{2}>\frac{t}{\sqrt{n}}+\textsf{E}\Xi^{2} \big\}
\le \exp\Big(-C_{3}\min(\frac{t}{\max_{i\le N}\Vert B_{i}\Vert_{2}^{2}}, \frac{t^{2}}{\sum_{i\le N}\Vert B_{i}\Vert_{2}^{4}})\Big).\nonumber
\end{align}
Following the same line above, we can obtain the same bound for $$	\textsf{P}\big\{ -(\Xi^{2}-\textsf{E}\Xi^{2})>\frac{t}{\sqrt{n}} \big\}.$$
Hence, we have for $t\ge 0$
\begin{align}
\textsf{P}\big\{\big\vert \Xi^{2}-\textsf{E}\Xi^{2}\big\vert >\frac{t}{\sqrt{n}}\big\}
\le 2\exp\Big(-C_{3}\min(\frac{t}{\max_{i\le N}\Vert B_{i}\Vert_{2}^{2}}, \frac{t^{2}}{\sum_{i\le N}\Vert B_{i}\Vert_{2}^{4}})\Big).\nonumber
\end{align}
Thus it follows for $p\ge 1$
\begin{align}\label{3.5}
	\textsf{E}\big\vert \Xi^{2}-\textsf{E}\Xi^{2}\big\vert^{p}\lesssim p^{p}.
\end{align}
Observe that, 
\begin{align}
	\textsf{E}\Xi^{2}&=\sum_{i\le N}\Vert B_{i}\Vert_{2}^{2}\textsf{E}\max_{j\le n}\tilde{g}_{ij}^{2}\le\sum_{i\le N}\Vert B_{i}\Vert_{2}^{2}\sum_{j\le n}\textsf{E}\tilde{g}_{ij}^{2} \nonumber\\
 &\le 2n(\sum_{i\le N}\Vert B_{i}\Vert_{2}^{2})\int_{a}^{\infty}x^{2}\exp(-\frac{x^{2}}{2})\, dx.\nonumber
\end{align}
Note that
\begin{align}
    \sum_{i\le N}\Vert B_{i}\Vert_{2}^{2}=\text{Trace}(B^\top B)\le n\Vert B\Vert^{2}\le n.\nonumber
\end{align}
Hence, we have $$\textsf{E}\Xi^{2}\le 2n^{2}\int_{a}^{\infty}x^{2}\exp(-\frac{x^{2}}{2})\, dx=o(1),$$ which implies with (\ref{conditional}) that
 \begin{align}\label{3.9}
 	(\textsf{E}\Vert BA^{\prime}\Vert^{p})^{1/p}=(\textsf{E}_{\tilde{g}}\textsf{E}_{g}\Vert BA^{\prime}\Vert^{p})^{1/p}\lesssim \sqrt{n}p.
 \end{align}
Combining (\ref{bound}) and (\ref{3.9}), we have
\begin{align}
(	\textsf{E} \Vert BA\Vert^{p})^{1/p} \lesssim\sqrt{n}p\nonumber.
\end{align}
\end{proof}

\subsubsection{Almost square matrices}
In this section, we consider the case of almost square matrices.
\begin{mytheo}
	Let $\varepsilon\in (0, 1)$ and let $N, n$ be positive integers satisfying $n\le N\le n^{3+\varepsilon/10}$. Let $A$ be an $N\times n$ random matrix whose entries are independent subexponential random variables with mean zero. Let $B$ be an $n\times N$ amtrix such that $\Vert B\Vert\le 1$. Then, we have for $p\ge 1$
	\begin{align}
		(\textsf{E}\Vert BA\Vert^{p})^{1/p}\le C(\varepsilon)\sqrt{np}.\nonumber
	\end{align}
\end{mytheo}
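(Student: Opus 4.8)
The plan is to follow the route sketched in the Introduction: pass from the subexponential matrix to a Gaussian one by the Comparison Theorem, and then exploit that, after conditioning on one family of Gaussians, the remaining matrix is Gaussian with independent columns. Write $\kappa:=3+\varepsilon/10$; by homogeneity (the bound is homogeneous in $\max_{i,j}\Vert a_{ij}\Vert_{\psi_{1}}$, as in Theorem 3.1) we may assume $\max_{i,j}\Vert a_{ij}\Vert_{\psi_{1}}\le1$, and it suffices to treat $n\le N\le n^{\kappa}$. First I would apply Lemma \ref{ct} to the symmetric convex function $f(M)=\Vert BM\Vert^{p}$ on $\mathbb{R}^{N\times n}$: the $a_{ij}$ are centered with $(\textsf{E}\vert a_{ij}\vert^{p})^{1/p}\lesssim p$ by Lemma \ref{Lem_2.3}(ii), while the products $g_{ij}g'_{ij}$ of independent standard Gaussians are centered with $cp\le(\textsf{E}\vert g_{ij}g'_{ij}\vert^{p})^{1/p}\le Cp$ for $p\ge2$, which is precisely the two-sided moment bound (exponent $\beta=1$) required of the comparison variables. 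This gives $(\textsf{E}\Vert BA\Vert^{p})^{1/p}\le C\,(\textsf{E}\Vert G\Vert^{p})^{1/p}$ with $G:=B(g_{ij}g'_{ij})_{N\times n}$, and, conditionally on $(g'_{ij})$, the $n\times n$ matrix $G$ is Gaussian with independent columns, the $j$-th distributed as $N(0,\Sigma_{j})$ with $\Sigma_{j}=\sum_{i\le N}(g'_{ij})^{2}B_{i}B_{i}^{\top}$.

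Next I would estimate $\textsf{E}_{g}\Vert G\Vert^{p}$ for frozen $(g'_{ij})$, using two inputs. First, a logarithm-free bound for a Gaussian matrix with independent columns $G_{j}\sim N(0,\Sigma_{j})$, namely $\textsf{E}_{g}\Vert G\Vert\lesssim\Vert\sum_{j}\Sigma_{j}\Vert^{1/2}+\big(\textsf{E}_{g}\max_{j\le n}\Vert G_{j}\Vert_{2}^{2}\big)^{1/2}$. Second, $\Vert G\Vert$ is a Lipschitz function of $(g_{ij})$ with Lipschitz constant at most $\max_{i,j}\vert g'_{ij}\vert$ (since $\Vert B\Vert\le1$), so Gaussian concentration yields $(\textsf{E}_{g}\Vert G\Vert^{p})^{1/p}\le\textsf{E}_{g}\Vert G\Vert+C\sqrt{p}\,\max_{i,j}\vert g'_{ij}\vert$. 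Because the $B_{i}B_{i}^{\top}$ are positive semidefinite and $\sum_{i}B_{i}B_{i}^{\top}=BB^{\top}$ with $\Vert BB^{\top}\Vert\le1$, one has $\Vert\sum_{j}\Sigma_{j}\Vert=\Vert\sum_{i}(\sum_{j}(g'_{ij})^{2})B_{i}B_{i}^{\top}\Vert\le\max_{i\le N}\sum_{j}(g'_{ij})^{2}$; similarly $\mathrm{Tr}\,\Sigma_{j}=\sum_{i}(g'_{ij})^{2}\Vert B_{i}\Vert_{2}^{2}$ and $\Vert\Sigma_{j}\Vert\le\max_{i\le N}(g'_{ij})^{2}$, while a Gaussian quadratic-form bound gives $\textsf{E}_{g}\max_{j}\Vert G_{j}\Vert_{2}^{2}\lesssim\max_{j}\mathrm{Tr}\,\Sigma_{j}+(\log n)\max_{j}\Vert\Sigma_{j}\Vert$. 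Altogether this reduces matters to bounding the moments of three $(g'_{ij})$-measurable quantities: $\max_{i\le N}\sum_{j}(g'_{ij})^{2}$, $\max_{j\le n}\sum_{i}\Vert B_{i}\Vert_{2}^{2}(g'_{ij})^{2}$, and $\max_{i,j}(g'_{ij})^{2}$.

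Finally I would take $(g'_{ij})$-expectations, and here is where $n\le N\le n^{\kappa}$ enters. The first quantity is a maximum of $N$ copies of $\chi^{2}_{n}$, the second a maximum of $n$ weighted chi-squares with weights $\le1$ and total weight $\sum_{i}\Vert B_{i}\Vert_{2}^{2}\le n$, and the third a maximum of $Nn$ squared standard Gaussians; since $\log(Nn)\lesssim_{\varepsilon}\log n\ll n$, Lemma \ref{Lem_Berstein} together with the elementary union bound over maxima gives $\big(\textsf{E}(\max_{i\le N}\sum_{j}(g'_{ij})^{2})^{p}\big)^{1/p}\lesssim_{\varepsilon}n+p$, $\big(\textsf{E}(\max_{j}\sum_{i}\Vert B_{i}\Vert_{2}^{2}(g'_{ij})^{2})^{p}\big)^{1/p}\lesssim n+p$, and $\big(\textsf{E}(\max_{i,j}(g'_{ij})^{2})^{p}\big)^{1/p}\lesssim_{\varepsilon}\log n+p$: no maximum costs more than its typical size plus $p$. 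Feeding these back and using $\sqrt{\log n}\le\sqrt{n}$ and $\log n\le n$ (so that $\sqrt{p\log n}\le\sqrt{np}$), one arrives at $(\textsf{E}\Vert BA\Vert^{p})^{1/p}\lesssim_{\varepsilon}\sqrt{np}$ for $p\le n$ (and $\lesssim_{\varepsilon}\sqrt{np}+p$ in general, which still yields the bound $C(\varepsilon)\sqrt{n}\,p$ used for Theorem \ref{Theo_main_weak}).

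The hard part is the first input above: the operator norm of the conditional Gaussian matrix $G$ must be controlled with no logarithmic loss. A crude $(1/2)$-net over $S^{n-1}$, as in the proof of Theorem 3.1, would pit the Lipschitz scale $(\max_{i,j}(g'_{ij})^{2})^{1/2}\asymp\sqrt{\log n}$ of each coordinate functional against the $5^{n}$ points of the net, producing a spurious factor $\sqrt{n\log n}$. The way around it is that a unit vector $x$ has weak variance $x^{\top}\Sigma_{j}x\asymp\log n$ only when it is nearly aligned with a large column of $B$, of which there are at most $O(n)$ since $\sum_{i}\Vert B_{i}\Vert_{2}^{2}\le n$; these few directions can be peeled off and treated directly, while on the complementary directions the weak variance is $O(1)$ and the net argument loses nothing; equivalently, one invokes the structural, column-independent bound for Gaussian operator norms recorded in the first input. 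Everything else (the comparison reduction and the chi-square moment estimates) is routine.
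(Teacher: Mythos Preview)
Your route is genuinely different from the paper's, and it has a real gap at the step you yourself flag as ``the hard part.'' The paper never touches the comparison theorem for this result; instead it conditions on the random level $M$ defined by $\max_{i,j}|a_{ij}|=(Mn/\log(2N))^{1/(2+\varepsilon/4)}$, so that on $\{M\le t\}$ the entries are bounded and Lemma \ref{lm32} (Vershynin's bounded-entry estimate) applies directly, giving $(\textsf{E}(\|BA\|^p\mid M\le t))^{1/p}\le C_0(\varepsilon)\sqrt{ptn}$. The hypothesis $N\le n^{3+\varepsilon/10}$ combined with the subexponential tail of $\max_{i,j}|a_{ij}|$ then yields $\textsf{P}\{M>C_1(\varepsilon)t\}\le 2\exp(-ct^{1/(4+\varepsilon)})$, and a dyadic decomposition in $M$ finishes. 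No Gaussian comparison and no structural operator-norm bound are used.

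Your plan---reduce to $B(g_{ij}g'_{ij})$ via Lemma \ref{ct} and freeze $g'$---is exactly the mechanism the paper uses for Theorem 3.1, where the small-column hypothesis $\|B_i\|_2\lesssim n^{-1}\log^{-5/2}n$ is precisely what keeps the conditional variance under control. Here that hypothesis is absent, and everything rests on your ``first input'': the claimed log-free inequality $\textsf{E}_g\|G\|\lesssim\|\sum_j\Sigma_j\|^{1/2}+(\textsf{E}_g\max_j\|G_j\|_2^2)^{1/2}$ for a Gaussian matrix with independent columns. This is not among the paper's lemmas, and the standard symmetrization/Rudelson route gives only $\|\sum_j\Sigma_j\|^{1/2}+\sqrt{\log n}\,(\textsf{E}_g\max_j\|G_j\|^2)^{1/2}$; since $(\textsf{E}_g\max_j\|G_j\|^2)^{1/2}$ is of order $\sqrt{n}$ here, that extra $\sqrt{\log n}$ is exactly the spurious $\sqrt{n\log n}$ you are trying to avoid. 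Your proposed workaround---peel off the directions aligned with the ``large'' columns of $B$---is essentially the large/small column decomposition that the paper performs one level up, in the proof of Theorem \ref{Theo_main_weak}, to split Theorem 3.1 from Theorem 3.2; to make the small-column piece usable you would need $\|B_i\|_2\lesssim n^{-1}\log^{-5/2}n$, which forces $\sim n^3\log^5 n$ ``large'' columns and sends you straight back to the almost-square regime you are trying to prove. So as written this step is circular. A further, minor point: even granting the log-free bound, your output is $(\textsf{E}\|BA\|^p)^{1/p}\lesssim_\varepsilon\sqrt{np}+p$, which for $p>n$ is strictly weaker than the stated $\sqrt{np}$ (though, as you note, still adequate for Theorem \ref{Theo_main_weak}).
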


Before proving the above result, we first introduce  the following properties.

\begin{mylem}[Theorem 4.1 in \cite{ptrfvershynin}] \label{lm32}
	Let $\varepsilon \in (0, 1), M\ge 1$ and let $N\ge n$ be positive integers such that $\log(2N)\le Mn$. Consider an $N\times n$ random matrix $A$ whose entries are independent random variable $a_{ij}$ with mean zero such that
  \begin{align}
  	\textsf{E}\vert a_{ij}\vert^{2+\varepsilon}\le 1,\qquad \vert a_{ij}\vert\le (\frac{Mn}{\log(2N)})^{\frac{1}{2+\varepsilon}}.\nonumber
  \end{align}
Let $B$ be an $n\times N$ matrix such that $\Vert B\Vert\le 1$. Then, one has for every $t>0$
\begin{align}
	\textsf{P}\{\Vert BA\Vert>(C(\varepsilon)+t)\sqrt{Mn} \}\le 4e^{-t^{2}/4}.\nonumber
\end{align}
In particular, one has for every $p\ge 1$
\begin{align}
	(\textsf{E}\Vert BA\Vert^{p})^{1/p}\le C_{0}(\varepsilon)\sqrt{pMn}.\nonumber
\end{align}
\end{mylem}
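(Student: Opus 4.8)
The plan is to decompose $A$ into a bounded part plus a negligible tail, to bound the bounded part with Lemma~\ref{lm32}, and to bound the tail exactly as the $B\tilde{A}$ term is bounded in the proof of Theorem~3.1. First I would make the usual reductions: by homogeneity one may assume $\Vert a_{ij}\Vert_{\psi_1}\le 1$ for all $i,j$ (absorbing this quantity into $C(\varepsilon)$), by the symmetrization already used in the Introduction one may assume every $a_{ij}$ is symmetric, and for $n$ below an $\varepsilon$-dependent threshold the matrix $BA$ has bounded dimensions (since $N\le n^{3+\varepsilon/10}$), so that $\Vert BA\Vert\le\Vert A\Vert_{\mathrm{HS}}=(\sum_{i,j}a_{ij}^2)^{1/2}$ together with $(\textsf{E}|a_{ij}|^p)^{1/p}\le K_2 p$ from Lemma~\ref{Lem_2.3} already suffices. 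Next fix the truncation level $a:=\kappa\,(n/\log(2N))^{1/(2+\varepsilon)}$, where $\kappa$ is a constant chosen so that $\textsf{E}|a_{ij}/\kappa|^{2+\varepsilon}\le 1$ (possible because $(\textsf{E}|a_{ij}|^{2+\varepsilon})^{1/(2+\varepsilon)}\lesssim 1$ by Lemma~\ref{Lem_2.3}), put $\overline{a}_{ij}:=a_{ij}\mathbb{I}_{(|a_{ij}|\le a)}$ and $\tilde{a}_{ij}:=a_{ij}\mathbb{I}_{(|a_{ij}|>a)}$, which remain independent, symmetric and mean zero, and write $BA=B\overline{A}+B\tilde{A}$, so that $(\textsf{E}\Vert BA\Vert^p)^{1/p}\le(\textsf{E}\Vert B\overline{A}\Vert^p)^{1/p}+(\textsf{E}\Vert B\tilde{A}\Vert^p)^{1/p}$.

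For the bounded part I would apply Lemma~\ref{lm32} to the matrix $\overline{A}/\kappa$ with parameter $M:=1$. Its hypotheses are met: $\textsf{E}|\overline{a}_{ij}/\kappa|^{2+\varepsilon}\le\textsf{E}|a_{ij}/\kappa|^{2+\varepsilon}\le 1$, $|\overline{a}_{ij}/\kappa|\le a/\kappa=(n/\log(2N))^{1/(2+\varepsilon)}$, and $\log(2N)\le n$ for $n$ large (this last point is exactly where the almost-square hypothesis enters, since $N\le n^{3+\varepsilon/10}$ gives $\log(2N)\le(3+\varepsilon/10)\log n+\log 2\le n$). Hence $(\textsf{E}\Vert B\overline{A}\Vert^p)^{1/p}=\kappa(\textsf{E}\Vert B(\overline{A}/\kappa)\Vert^p)^{1/p}\le \kappa C_0(\varepsilon)\sqrt{pn}$ for all $p\ge 1$; with this choice of $a$ the parameter $M$ stays a bounded constant, so nothing is lost here beyond the $\varepsilon$-dependent constant supplied by Lemma~\ref{lm32}.

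For the tail part I would run the $B\tilde{A}$ argument from the proof of Theorem~3.1 essentially line by line. Introduce independent standard Gaussians $\{g_{ij},g'_{ij}\}$ and $A':=\big(g_{ij}g'_{ij}\mathbb{I}_{(|g'_{ij}|>\sqrt a)}\big)$. Since $(\textsf{E}|g_{ij}g'_{ij}|^p)^{1/p}\asymp p$, Lemma~2.6 and the pointwise inequality $\mathbb{I}_{(|g_{ij}g'_{ij}|>a)}\le\mathbb{I}_{(|g_{ij}|>\sqrt a)}+\mathbb{I}_{(|g'_{ij}|>\sqrt a)}$ give $\textsf{P}\{|\tilde{a}_{ij}|>t\}\le c\,\textsf{P}\{c|a'_{ij}|>t\}$ for all $t\ge 0$; as $X\mapsto\Vert BX\Vert$ is symmetric and convex, Remark~\ref{Rem_contraction} then yields $(\textsf{E}\Vert B\tilde{A}\Vert^p)^{1/p}\lesssim(\textsf{E}\Vert BA'\Vert^p)^{1/p}$. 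Conditioning on $\{\tilde{g}_{ij}:=g'_{ij}\mathbb{I}_{(|g'_{ij}|>\sqrt a)}\}$, a $(1/2)$-net $\mathcal N$ of $S^{n-1}$ with $|\mathcal N|\le 5^n$ and the Gaussian concentration of Lemma~2.1, exactly as in the derivation of \eqref{conditional}, give $\textsf{E}_g\Vert BA'\Vert^p\le C^p\,\Xi^p(\sqrt n+\sqrt p)^p$, where $\Xi^2:=\sum_{i\le N}(\max_{j\le n}\tilde{g}_{ij}^2)\Vert B_i\Vert_2^2$; this step uses only $\Vert B\Vert\le 1$.

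It remains to bound $(\textsf{E}\,\Xi^p)^{1/p}$, and this is the \emph{main obstacle}, the point where Theorem~3.2 genuinely departs from Theorem~3.1: there is now no small-column hypothesis, only $\sum_{i\le N}\Vert B_i\Vert_2^2=\mathrm{Tr}(B^\top B)\le n$ and $\max_{i}\Vert B_i\Vert_2^2\le 1$ (hence $\sum_{i\le N}\Vert B_i\Vert_2^4\le n$). Because $a$ is a positive power of $n$, each $\textsf{E}\tilde{g}_{ij}^2\asymp\sqrt a\,e^{-a/2}$ is super-polynomially small, so $\textsf{E}\,\Xi^2\le n\cdot n\,\textsf{E}\tilde{g}_{11}^2=o(1)$; for the fluctuation I would carry out the Cram\'er--Chernoff computation of \eqref{3.2}--\eqref{3.5}, now using $\Vert\max_{j}\tilde{g}_{ij}^2-\textsf{E}\max_{j}\tilde{g}_{ij}^2\Vert_{\psi_1}\lesssim_{\varepsilon}(\log n)/a$ (via Lemma~2.5 and Remark~2.1) in place of the small-column input, together with $\max_{i}\Vert B_i\Vert_2^2\le 1$ and $\sum_{i\le N}\Vert B_i\Vert_2^4\le n$, and, as indicated in the Introduction, a further truncation of $\Xi$ to absorb the spurious $\sqrt n$ factor; in the regime $N\le n^{3+\varepsilon/10}$ this yields $(\textsf{E}\Vert B\tilde{A}\Vert^p)^{1/p}\lesssim_{\varepsilon}\sqrt{np}$. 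Adding the two estimates gives $(\textsf{E}\Vert BA\Vert^p)^{1/p}\le C(\varepsilon)\sqrt{np}$. All of the difficulty sits in the $\Xi$-estimate without the small-column assumption; the one other delicate point is that $a$ must be large enough for $\textsf{E}\,\Xi^2$ to be negligible yet small enough for the parameter $M$ in Lemma~\ref{lm32} to stay bounded, which is exactly what $\log(2N)\asymp_{\varepsilon}\log n\ll n$ (i.e.\ $N\le n^{3+\varepsilon/10}$) guarantees.
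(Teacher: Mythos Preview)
Your proposal does not address the stated lemma. Lemma~\ref{lm32} is Theorem~4.1 of \cite{ptrfvershynin}, quoted in this paper without proof and used as a black box; what you have written is an argument for \emph{Theorem~3.2} (the almost-square case), in which Lemma~\ref{lm32} is merely invoked as a tool. Comparing, then, to the paper's own proof of Theorem~3.2: the paper does \emph{not} perform a fixed truncation followed by a Gaussian comparison on the tail. Instead it introduces a \emph{random} level $M$ via $\max_{i,j}|a_{ij}|=(Mn/\log(2N))^{1/(2+\varepsilon/4)}$, observes that conditioning on $\{M\le t\}$ preserves independence and the $(2+\varepsilon/4)$-moment bound (Lemma~\ref{Lemma3.3}), applies Lemma~\ref{lm32} conditionally to get $\textsf{E}(\|BA\|^p\mid M\le t)\le C(\varepsilon)^p(ptn)^{p/2}$, bounds $\textsf{P}\{M>t\}$ from the subexponential tails together with $N\le n^{3+\varepsilon/10}$, and finishes by a dyadic sum over $t$. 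No tail matrix $B\tilde A$ and no $\Xi$-analysis enter at all.

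Your route, taken as a proof of Theorem~3.2, has a genuine gap at the $\Xi$-estimate you yourself flag. With $\|\max_j\tilde g_{ij}^2-\textsf{E}\max_j\tilde g_{ij}^2\|_{\psi_1}\lesssim(\log n)/a$, $\max_i\|B_i\|_2^2\le 1$ and $\sum_i\|B_i\|_2^4\le n$, the Cram\'er--Chernoff/Bernstein computation gives only $(\textsf{E}|\Xi^2-\textsf{E}\Xi^2|^p)^{1/p}\lesssim((\log n)/a)\big(p+\sqrt{np}\big)$; since $a\asymp n^{1/(2+\varepsilon)}$ the prefactor $\sqrt n/a\asymp n^{1/2-1/(2+\varepsilon)}$ diverges, so you do not obtain $(\textsf{E}\Xi^p)^{1/p}\lesssim\sqrt p$ and the final bound does not close to $\sqrt{np}$. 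The ``further truncation of $\Xi$'' you invoke from the Introduction is exactly the replacement $g'\mapsto g'\mathbb{I}_{(|g'|>\sqrt a)}$ that you already have; in Theorem~3.1 that device succeeds only because the small-column hypothesis $\|B_i\|_2\lesssim n^{-1}\log^{-5/2}n$ supplies the missing powers of $n$, and there is no additional truncation step anywhere in the paper. Without small columns you would need a different tail argument (for instance, exploiting directly that $\tilde A=0$ with probability $1-O(nNe^{-ca})$ and controlling the complement crudely), which your proposal does not provide.
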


\begin{mylem}\label{Lemma3.3}
	Let $\xi$ be a random variable and $K$ be a real number. Then
	\begin{align}
		\textsf{E}(\xi\vert \xi\le K)\le \textsf{E}\xi.\nonumber
	\end{align}
\end{mylem}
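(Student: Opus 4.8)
The plan is to derive the inequality from the law of total expectation together with two one-sided bounds that are immediate from the meaning of a conditional expectation on an event. First I would dispose of the degenerate cases: if $\textsf{P}\{\xi\le K\}=0$ then $\textsf{E}(\xi\vert\xi\le K)$ is undefined and there is nothing to prove; if $\textsf{P}\{\xi\le K\}=1$ then $\textsf{E}(\xi\vert\xi\le K)=\textsf{E}\xi$ and the inequality holds with equality; and we may assume $\xi$ is integrable, since otherwise the bound is trivial. So set $p:=\textsf{P}\{\xi\le K\}\in(0,1)$ and split the expectation according to the event $\{\xi\le K\}$:
\[
  \textsf{E}\xi \;=\; p\,\textsf{E}(\xi\vert\xi\le K)\;+\;(1-p)\,\textsf{E}(\xi\vert\xi> K).
\]

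The key observation is that on the event $\{\xi\le K\}$ one has $\xi\le K$, hence $\textsf{E}(\xi\vert\xi\le K)\le K$, while on the event $\{\xi> K\}$ one has $\xi> K$, hence $\textsf{E}(\xi\vert\xi> K)\ge K$; combining these gives $\textsf{E}(\xi\vert\xi> K)\ge K\ge \textsf{E}(\xi\vert\xi\le K)$. Since $1-p\ge 0$, replacing $\textsf{E}(\xi\vert\xi> K)$ in the displayed identity by the smaller quantity $\textsf{E}(\xi\vert\xi\le K)$ only decreases the right-hand side, so
\[
  \textsf{E}\xi \;\ge\; p\,\textsf{E}(\xi\vert\xi\le K)+(1-p)\,\textsf{E}(\xi\vert\xi\le K)\;=\;\textsf{E}(\xi\vert\xi\le K),
\]
which is exactly the claim.

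I do not anticipate any real obstacle here: the statement is elementary, and the only points requiring a word of care are the bookkeeping for the degenerate values $p\in\{0,1\}$ and the integrability of $\xi$, all handled as above. If one prefers to avoid conditional expectations entirely, an equivalent route is to write everything through $\textsf{E}[\xi\,\mathbb{I}_{(\xi\le K)}]$, use the two bounds $\textsf{E}[\xi\,\mathbb{I}_{(\xi\le K)}]\le Kp$ and $\textsf{E}[\xi\,\mathbb{I}_{(\xi> K)}]\ge K(1-p)$, and rearrange $\textsf{E}\xi=\textsf{E}[\xi\,\mathbb{I}_{(\xi\le K)}]+\textsf{E}[\xi\,\mathbb{I}_{(\xi> K)}]$ to obtain the same inequality.
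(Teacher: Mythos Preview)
Your proof is correct and follows essentially the same approach as the paper's: both use the law of total expectation to write $\textsf{E}\xi$ as a convex combination of $a:=\textsf{E}(\xi\mid\xi\le K)$ and $b:=\textsf{E}(\xi\mid\xi>K)$, observe that $a\le K\le b$, and conclude $a\le\textsf{E}\xi$. Your version is a bit more explicit about the degenerate cases $p\in\{0,1\}$ and integrability, which the paper omits.
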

\begin{proof}
	Note that
	\begin{align}
		\textsf{E}\xi=\textsf{E}(\xi\vert \xi\le K)\textsf{P}\{\xi\le K \}+\textsf{E}(\xi\vert \xi> K)\textsf{P}\{\xi> K \}.\nonumber
	\end{align}
For simplicity, let $a=\textsf{E}(\xi\vert \xi\le K)$ and $b=\textsf{E}(\xi\vert \xi> K)$. Due to that $\textsf{E}\xi$ is a convex combination of $a$ and $b$ and $a\le K\le b$, we must have $a\le \textsf{E}\xi\le b$.
\end{proof}

\begin{proof}[Proof of Theorem 3.2]
	We may and do assume $\max_{i,j}\Vert a_{ij}\Vert_{\psi_{1}}\le 1$. Define the random variable $M$ by the following equation
	\begin{align}
		\max_{i,j}\vert a_{ij}\vert=(\frac{Mn}{\log(2N)})^{\frac{1}{2+\varepsilon/4}}.\nonumber
	\end{align}
Note that $\{ M\le t\}$ is an intersection of a family of independent events. Therefore, conditioning on this event preserves the independence of the entries of $A$. By Lemma 2.3 and the assumption $\max_{i,j}\Vert a_{ij}\Vert_{\psi_{1}}\le 1$, we do assume
\begin{align}
	\textsf{E}\vert a_{ij}\vert^{2+\varepsilon/4}\le 1.\nonumber
\end{align}
Note that
\begin{align}
	\textsf{E}\big(\vert a_{ij}\vert^{2+\varepsilon/4}\big|M\le t\big)&=\textsf{E}\big(\vert a_{ij}\vert^{2+\varepsilon/4}\big|\vert a_{ij}\vert^{2+\varepsilon/4}\le (\frac{tn}{\log(2N)}),\,\,\,\,\forall i,j\big)\nonumber\\
	&=\textsf{E}\big(\vert a_{ij}\vert^{2+\varepsilon/4}\big|\vert a_{ij}\vert^{2+\varepsilon/4}\le (\frac{tn}{\log(2N)})\big)\nonumber\\
	&\le \textsf{E}\vert a_{ij}\vert^{2+\varepsilon/4}\le 1.\nonumber
\end{align}
By virtue of Lemma 3.2, we have for $t, p\ge 1$
\begin{align}\label{4.1}
	\big(\textsf{E}(\Vert BA\Vert^{p}| M\le t)\big)^{1/p}\le C_{0}(\varepsilon)\sqrt{ptn}.
\end{align}

Next, turn to the probability properties of $M$. By the subexponential property, we have
\begin{align}
	\textsf{P}\big\{ \vert a_{ij}\vert>(t^{2}nN)^{\frac{1}{8+2\varepsilon}}\big\}\le 2\exp\big(-(t^{2}nN)^{\frac{1}{8+2\varepsilon}}\big).\nonumber
\end{align}
Taking the union bound over all $nN$ random variables $a_{ij}$, we obtain
\begin{align}
	\textsf{P}\big\{ \max_{i,j}\vert a_{ij}\vert>(t^{2}nN)^{\frac{1}{8+2\varepsilon}}\big\}\le 2nN\exp\big(-(t^{2}nN)^{\frac{1}{8+2\varepsilon}}\big).
\end{align}
The assumption $N\le n^{3+\varepsilon/10}$ yields that
$
	nN\le \big(\frac{C(\varepsilon)n}{\log(2N)}\big)^{4+\varepsilon/8}.
$
Therefore, we have for $t\ge 1$
\begin{align}
	(t^{2}nN)^{\frac{1}{8+2\varepsilon}}\le \big(\frac{C(\varepsilon)tn}{\log(2N)}\big)^{\frac{1}{2+\varepsilon/4}}.\nonumber
\end{align}
which in turn implies for $t\ge 1$
\begin{align}
	&\textsf{P}\{ M>C(\varepsilon)t\}\le \textsf{P}\{ \max_{i,j}\vert a_{ij}\vert >\big(\frac{C(\varepsilon)tn}{\log(2N)}\big)^{\frac{1}{2+\varepsilon/4}}\}\nonumber\\
	\le &\textsf{P}\{ \max_{i,j}\vert a_{ij}\vert>(t^{2}nN)^{\frac{1}{8+2\varepsilon}}\}\le 2nN\exp(-(t^{2}nN)^{\frac{1}{8+2\varepsilon}}).\nonumber
\end{align}
By adjusting the universal constants, we have for $t\ge 1$
\begin{align}\label{4.3}
	\textsf{P}\{ M>C_{1}(\varepsilon)t\}\le 2\exp(-ct^{\frac{1}{4+\varepsilon}}).
\end{align}

We have for $p\ge 1$
\begin{align}\label{4.4}
	\textsf{E}\Vert BA\Vert^{p}=\textsf{E}\Vert BA\Vert^{p}\mathbb{I}_{(M\le C_{1}(\varepsilon))}+\sum_{k=1}^{\infty}\textsf{E}\Vert BA\Vert^{p}\mathbb{I}_{(2^{k-1}C_{1}(\varepsilon)<M\le 2^{k}C_{1}(\varepsilon))}.
\end{align}
The result (\ref{4.1}) and Lemma \ref{Lemma3.3} yield that
\begin{align}
	\textsf{E}\Vert BA\Vert^{p}\mathbb{I}_{(M\le C_{1}(\varepsilon))}\le\textsf{E}\Big(\Vert BA\Vert^{p}\Big|M\le C_{1}(\varepsilon)\Big)\le C_{2}(\varepsilon)^{p}(\sqrt{np})^{p}.\nonumber
\end{align}
We have by Cauchy-Schwarz inequality and the result (\ref{4.3})
\begin{eqnarray}
	& &\textsf{E}\Vert BA\Vert^{p} {I}_{(2^{k-1}C_{1}(\varepsilon)<M \le  2^{k}C_{1}(\varepsilon))}\nonumber\\
	&\le &\big(\textsf{E}\Vert BA\Vert^{2p} {I}_{(M\le 2^{k}C_{1}(\varepsilon))}\big)^{1/2}\big(\textsf{P}\big\{ M>2^{k-1}C_{2}(\varepsilon)\big\}\big)^{1/2}\nonumber\\
	&\le &C_{2}(\varepsilon)^{p}(\sqrt{np})^{p}(2^{k})^{p/2}\exp(-c2^{\frac{k}{4+\varepsilon}}).\nonumber
\end{eqnarray}
Hence, the result (\ref{4.4}) is further bounded by
\begin{align}
	C_{2}(\varepsilon)^{p}(\sqrt{np})^{p}+\sum_{k=1}^{\infty}C_{2}(\varepsilon)^{p}(\sqrt{np})^{p}(2^{k})^{p/2}\exp(-c2^{\frac{k}{4+\varepsilon}})\le C_{3}(\varepsilon)^{p}(\sqrt{np})^{p}.\nonumber
\end{align}
\end{proof}

We are now ready to prove  our first main result.

\begin{proof}[Proof of Theorem \ref{Theo_main_weak}]
	Let $B_{1},\cdots, B_{N}$ be the columns of the matrix $B$, and denote by $I$ the subset of $\{1,2,\cdots,N\}$ of large columns, namely
	\begin{align}
		I:=\{i: \Vert B_{i}\Vert_{2}>Cn^{-1}\log^{-\frac{5}{2}}n\}.\nonumber
	\end{align}
Note that
\begin{align}
	\sum_{i\le N}\Vert B_{i}\Vert_{2}^{2}\le n\Vert B\Vert^{2}\le n.\nonumber
\end{align}
Hence, we have
\begin{align}
	N_{0}:=|I|\le C^{-2}n^{3}\log^{5} n\le n^{3+\varepsilon/10},\nonumber
\end{align}
where $\varepsilon\in (0,1)$ and $C$ is a sufficiently large universal constant.

Denote by $A_{I}$ the $N_{0}\times n$ submatrix of $A$ whose rows are in $I$, by $B_{I}$ the $n\times N_{0}$  submatrix of $B$ whose columns are in $I$ (and similarly for $I^{c}$). Now $W$ is rewritten as
\begin{align}
	W=BA=B_{I}A_{I}+B_{I^{c}}A_{I^{c}}.\nonumber
\end{align}
Putting Theorems 3.1 and 3.2 together yields the desired result.
\end{proof}



\textbf{Acknowledgment} Su was partly supported by the National Natural Science Foundation of China  (12271475, 11871425) and fundamental research funds for central universities grants. Wang was partly supported by the Shandong Provincial Natural Science Foundation (No. ZR2024MA082) and the National Natural Science Foundation of China (No. 12071257, 12371148).


\end{document}